\newtheorem{thm}{Theorem}[section]
\newtheorem{lem}{Lemma}[section]
\newtheorem{prop}{Proposition}[section]
\numberwithin{equation}{section}
\theoremstyle{definition}
\theoremstyle{remark}
\newtheorem{rem}{Remark}[section]
\newcommand\la{\langle}
\newcommand\ra{\rangle}
\newcommand{\eps}{\varepsilon}
\newcommand{\D}{\mathbb{D}}
\newcommand{\li}{\mathcal L}
\newcommand\ho{H(\overline{\mathbb D\mathstrut})}
\newcommand\ce{\mathcal C}
\newcommand\bk{\mathfrak B}
\newcommand\lo{\overline{\mathcal L\mathstrut}}
\begin{document}

\title[Logarithmic Bloch space and its predual]{Logarithmic Bloch space and its predual}%
\author{Miroslav Pavlovi\'c}%
\address{Faculty of Mathematics, University of Belgrade, Studentski trg 16, 11001 Beograd, p.p. 550, Serbia}%
\email{pavlovic@matf.bg.ac.rs}%

\thanks{Supported by MNTR Serbia, Project  ON174017}%
\subjclass[2010]{46.30, 30D55}%
\keywords{Libera operator, Cesaro operator, Hardy spaces, logarithmic Bloch type spaces, predual}
\begin{abstract}
We consider the space $\bk^1_{\log^\alpha}$, of analytic functions on the unit disk $\D,$ defined by the requirement
$\int_\D|f'(z)|\phi(|z|)\,dA(z)<\infty,$ where $\phi(r)=\log^\alpha(1/(1-r))$ and show that it is a predual of the ``$\log^\alpha$-Bloch" space and the dual of the corresponding little Bloch space. We prove that a function $f(z)=\sum_{n=0}^\infty a_nz^n$ with $a_n\downarrow 0$ is in $\bk^1_{\log^\alpha}$ iff $\sum_{n=0}^\infty \log^\alpha(n+2)/(n+1)<\infty$
and apply this to obtain a criterion for membership of the Libera transform of a function with positive coefficients
in $\bk^1_{\log^\alpha}.$ Some properties of the Ces\`aro and the Libera operator are considered as well.
\end{abstract}
\maketitle
\section{Introduction and some results}
Let $H(\D)$ denote the space of all functions analytic in the unit disk $\D$ of the complex plane.  Endowed with
the topology of uniform convergence on compact subsets of $\mathbb
D,$ the class $H(\mathbb D)$ becomes a complete locally convex
space. In this paper we are concerned with the predual of the space $\bk_{\log^\alpha}$, $\alpha\in \mathbb R,$
\begin{equation}\label{} \bk_{\log^\alpha}=\Big\{f\in H(\D):|f'(z)|=\mathcal O\Big((1-|z|)^{-1}\log^\alpha\frac 2{1-|z|}\Big)\Big\}. \end{equation}
The norm in $\bk_{\log^\alpha}$ is defined by
 \[\|f\|_{\bk_{\log^\alpha}}=|f(0)|+\sup_{z\in \D}(1-|z|)\log^{-\alpha}\frac1{1-|z|}.\]
 The subspace, $\mathfrak b_{\log^\alpha}$, of $\bk_{\log^\alpha}$ is defined by replacing ``$\mathcal O$" with ``$o$".
 It will be proved:
  \par\medskip\noindent {\bf (A)}
The dual of $\mathfrak b_{\log^\alpha}$ is isomorphic to $\bk^1_{\log^\alpha},$
\begin{equation}\label{12}\bk^1_{\log^\alpha}=\Big\{f:\|f\|_{\bk^1_{\log^\alpha}}=|f(0)|+\int_\D|f'(z)|\log^\alpha\frac 2{1-|z|}\,dA(z)<\infty\Big\},\end{equation}
and  the dual of $\bk^1_{\log^\alpha}$ is isomorphic to
$\bk_{\log^\alpha}$, in both cases with respect to the bilinear form
 \begin{align}\label{pairing}\langle f,g \rangle=\lim_{r\uparrow 1}\sum_{n=0}^\infty \hat f(n)\hat g(n)r^{2n}. \end{align}

 (In \eqref{12} $dA$ stands for the normalized Lebesgue measure on $\D.$)
 This extends the well-known result on the Bloch space  and the little
Bloch space~ $\mathfrak b:=\mathfrak b_{\log^0}.$

 These spaces are Banach spaces, and the space $\mathfrak b_{\log^\alpha}$ coincides with the closure in $\bk_{\log^\alpha}$ of the set of all polynomials. The space $\bk_{\log}:=\bk_{\log^1}$ occurs naturally in the study of pointwise multipliers on the usual Bloch space $\bk:=\bk_{\log^0}$ (see \cite{brown}).

 One of interesting properties of $\bk^1_{\log^\alpha}$ is described in the following theorem:
\begin{thm}\label{thm-decreasing}
Let $f(z)=\sum_{n=0}^\infty a_n z^n,$ where $\{a_n\}$ is a nonincreasing sequence, of real numbers, tending to zero.
Let $\alpha\ge -1.$ Then $f$ belongs to $\bk^1_{\log^\alpha}$ if and only if
\begin{equation}\label{eq-decr} S_\alpha(f):=\sum_{n=0}^\infty \frac{a_n\log^{\alpha}(n+2)}{n+1}\ <\infty. \end{equation}
Moreover, there is a constant $C$ independent of $\{a_n\}$ such that $S_\alpha(f)/C\le \|f\|_{\bk^1_{\log^\alpha}}\le CS_\alpha(f).$
\end{thm}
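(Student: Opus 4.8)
The strategy is to compute the integral $\int_\D|f'(z)|\log^\alpha\frac2{1-|z|}\,dA(z)$ directly, exploiting the fact that $a_n\downarrow 0$ forces $f'$ to be well-behaved in modulus. Since $f'(z)=\sum_{n=1}^\infty na_nz^{n-1}$ has coefficients $na_n$ which need not be monotone, I would instead work with the standard device of comparing $|f'(z)|$ on the circle $|z|=r$ to the corresponding integral means. The key elementary inequality is that for a power series $g(z)=\sum b_nz^n$ with $b_n\ge 0$ we have $M_1(r,g):=\frac1{2\pi}\int_0^{2\pi}|g(re^{i\theta})|\,d\theta \asymp g(r)=\sum b_nr^n$ — actually here $g=f'$ has coefficients $na_n$ which are not all nonnegative in sign but are real; however $f$ itself has $a_n\ge 0$, so one can relate things through $f$ rather than $f'$. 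Concretely, I would pass to polar coordinates,
\[
\|f\|_{\bk^1_{\log^\alpha}}-|a_0| \;=\; \int_0^1 \Big(\frac1{2\pi}\int_0^{2\pi}|f'(re^{i\theta})|\,d\theta\Big)\,\phi(r)\,2r\,dr,
\]
with $\phi(r)=\log^\alpha\frac2{1-r}$, and the whole problem reduces to estimating $M_1(r,f')$ from above and below by something comparable to $\sum_{n\ge1} n a_n r^{n-1}$ — but this last sum can diverge, so the real content is that $M_1(r,f')$ is comparable to $\frac{d}{dr}M_1(r,f)$ or, more usefully, to $(1-r)^{-1}$ times a tail/partial sum of the $a_n$. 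The cleanest route: use that $a_n\downarrow$ implies $\sum_{k=n}^{2n}a_k \asymp na_n$ and that the hyperbolic-type estimate $M_\infty(r,f)-\lim f \asymp \sum_{n} a_n r^n$ together with $M_1(r,f')\asymp (1-r)^{-1}(\,$something$)$ can be made precise via Hardy–Littlewood-type lemmas for monotone coefficients.

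In more detail, I would proceed as follows. First, establish the two-sided bound $M_1(r,f') \asymp \sum_{n=1}^\infty a_n n r^{n} \big/ r$ is false in general, so instead I record the correct statement: for $a_n\downarrow 0$,
\[
\frac{1}{2\pi}\int_0^{2\pi}|f'(re^{i\theta})|\,d\theta \;\asymp\; \frac{1}{1-r}\sum_{n=0}^{N(r)} a_n \cdot \frac{1}{N(r)} \quad\text{where } N(r)=\Big\lceil \frac1{1-r}\Big\rceil,
\]
equivalently $M_1(r,f')\asymp \frac1{1-r}\,\sigma_{N(r)}$ where $\sigma_N=\frac1N\sum_{n<N}a_n$ is the Cesàro average; this is exactly the kind of estimate available for functions with monotone coefficients (Hardy–Littlewood). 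Granting this, I substitute into the integral, change variables $n\leftrightarrow N(r)$ so that $dr \asymp n^{-2}\,dn$ on the dyadic scale $1-r\asymp 1/n$, and obtain
\[
\|f\|_{\bk^1_{\log^\alpha}} \asymp |a_0| + \sum_{n=1}^\infty \frac{\log^\alpha(n+2)}{n}\,\sigma_n,\qquad \sigma_n=\frac1n\sum_{k=0}^{n-1}a_k.
\]
Finally I show $\sum_n \frac{\log^\alpha(n+2)}{n}\sigma_n \asymp \sum_n \frac{\log^\alpha(n+2)}{n}\,a_n = S_\alpha(f)$; one direction ($\gtrsim$, since $\sigma_n\ge$ a tail average $\ge$ comparable to... ) uses $a_n\le \sigma_n$ is wrong — rather $\sigma_n\ge a_{n-1}$-type bounds from monotonicity, namely $a_n\le \sigma_n$ fails but $\sigma_{2n}\ge \frac12 a_{2n}$ does hold, giving one inequality, and for the reverse one applies an Abel summation / discrete Hardy inequality using $\alpha\ge -1$ to control $\sum \frac{\log^\alpha(n)}{n}\sigma_n$ by $\sum\frac{\log^\alpha(n)}{n}a_n$. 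The hypothesis $\alpha\ge-1$ enters precisely here, to guarantee the weight $\log^\alpha(n+2)/n$ is "regular enough" (summable against the Hardy averaging operator) — this is the step I expect to be the genuine technical obstacle, and I would handle it by splitting the double sum and comparing $\sum_{n\ge N}\frac{\log^\alpha(n+2)}{n^2}$ with $\frac{\log^\alpha(N+2)}{N}$, which requires monotonicity of $t\mapsto \log^\alpha(t)/t$ for large $t$ — true for every real $\alpha$ once $t$ is large, but the constant degrades as $\alpha\to-\infty$, consistent with the restriction $\alpha\ge-1$ being essentially sharp for the clean comparison.

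The main obstacle, then, is the passage between $M_1(r,f')$ and the coefficient data: one must be careful that $\{na_n\}$ is not monotone, so the naive "$M_1\asymp$ value at $r$" lemma does not apply to $f'$ directly. I would circumvent this by writing $f'(z)=\sum n a_n z^{n-1}$ and splitting into dyadic blocks $I_j=\{2^j\le n<2^{j+1}\}$; on each block $\{na_n: n\in I_j\}$ is comparable to the constant $2^j a_{2^j}$ (using $a_n\downarrow$), so block-by-block the relevant integral means are under control, and summing the blocks reconstitutes $S_\alpha(f)$ up to constants. Once that reduction is in place, everything else is bookkeeping with the logarithmic weight, and the final comparison $\sum\frac{\log^\alpha}{n}\sigma_n\asymp S_\alpha(f)$ is the only remaining nontrivial point, handled by Abel summation as indicated.
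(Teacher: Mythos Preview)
Your overall strategy---reduce to an estimate for $M_1(r,f')$ and then integrate against the logarithmic weight---is reasonable, and the lower bound does go through via Hardy's inequality, exactly as the paper does it: from $\pi M_1(r,f')\ge\sum_{n\ge1}\frac{na_n}{n+1}r^{n}$ one integrates against $\log^\alpha\frac{2}{1-r}$ and uses the normal-function estimate $\int_0^1 r^n\log^\alpha\frac{2}{1-r}\,dr\asymp \frac{\log^\alpha(n+2)}{n+1}$ to recover $S_\alpha(f)$ from below. Your detour through the Ces\`aro averages $\sigma_N$ is unnecessary here but harmless.

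The genuine gap is in the upper bound. Your claim that on a dyadic block $I_j=\{2^j\le n<2^{j+1}\}$ the numbers $na_n$ are ``comparable to the constant $2^ja_{2^j}$'' is false: monotonicity of $\{a_n\}$ gives only $2^ja_{2^{j+1}-1}\le na_n\le 2^{j+1}a_{2^j}$, and nothing prevents $a_{2^j}/a_{2^{j+1}}$ from being arbitrarily large (take any sequence that drops sharply between consecutive dyadic levels). So the block polynomial cannot be treated as a constant multiple of a fixed kernel, and your proposed route to $M_1(r,f')\lesssim(1-r)^{-1}\sigma_{N(r)}$ breaks down. What does work---and this is what the paper does, after passing to the smooth dyadic decomposition $f=\sum V_n*f$---is an Abel summation \emph{within} the block:
\[
V_n*f=\sum_{k=m}^{4m-1}(a_k-a_{k+1})Q_k+a_{4m}V_n,\qquad m=2^{n-1},\quad Q_k=V_n*\sum_{j=m}^k e_j,
\]
together with the Dirichlet-kernel bound $\|Q_k\|_1\le C\log(k+1-m)\le C(n+1)$. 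This yields $\|V_n*f\|_1\le C(n+1)(a_{2^{n-1}}-a_{2^{n+1}})+Ca_{2^{n+1}}$; summing against $(n+1)^\alpha$ and applying one more Abel summation (using $(n+1)^{\alpha+1}\le C\sum_{k\le n}(k+1)^\alpha$, which is exactly where $\alpha>-1$ enters, with $\alpha=-1$ handled separately) gives $\sum_n(n+1)^\alpha\|V_n*f\|_1\le CS_\alpha(f)$. The paper's decomposition theorem then identifies this sum with $\|f\|_{\mathfrak B^1_{\log^\alpha}}$. In short, your plan is missing precisely this Abel-summation-plus-Dirichlet-kernel mechanism; the pointwise comparability you invoke in its place is not available.
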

\begin{proof} See Section \ref{section-decreasing}.  \end{proof}
 In the case $\alpha=0,$ this assertion is proved in \cite{decreasing}. We can take $a_n$ to be the coefficients
 of the Libera transform of a function with positive coefficients. Namely, if $g(z)=\sum_{n=0}^\infty \hat g(n) z^n$ and
 \begin{equation}\label{abs} \sum_{n=0}^\infty \frac{|\hat g(n)|}{n+1}\ <\infty, \end{equation}
then the Libera transform $\li g$ of $g$ is well defined as
\begin{align}\label{eq-lib}\li g(z)&=\frac 1{1-z}\int_z^1f(\zeta)\,d\zeta\\&\nonumber=\sum_{n=0}^\infty z^n\sum_{k=n}^\infty \frac{\hat g(k)}{k+1}  \end{align}
(see, e.g., \cite{NP}). If $\hat g\ge0,$ then condition \eqref{abs} is also necessary for the existence of the integral in \eqref{eq-li}: take $z=0$ to conclude that \eqref{eq-li} implies the convergence of the integral
\[\int_0^1g(t)\,dt=\sum_{n=0}^\infty \frac{\hat g(n)}{n+1}.\]
Then, as an application of Theorem \ref{thm-decreasing} we get:
\begin{thm}\label{thm-lib-pos}
Let $\alpha>-1$, let $g\in H(\D)$, and $\hat g\ge 0$. Then $\li g$ is in $\bk^1_{\log^\alpha}$ if and only if
\begin{equation}\label{} K_\alpha(g):=\sum_{n=0}^\infty \frac{\hat g(n)\log^{\alpha+1}(n+2)}{n+1}\ <\infty.   \end{equation}
We have $K_\alpha(g)/C\le \|\li g\|_{\bk^1_{\log^\alpha}}\le CK_\alpha(g).$
\end{thm}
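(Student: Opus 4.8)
The plan is to reduce Theorem~\ref{thm-lib-pos} to Theorem~\ref{thm-decreasing} applied to the Taylor coefficients of $\li g$, and then to recognize $S_\alpha(\li g)$ as comparable to $K_\alpha(g)$ via an interchange of summation. Write $\li g(z)=\sum_{n=0}^\infty a_nz^n$, so that by \eqref{eq-lib} we have $a_n=\sum_{k=n}^\infty \hat g(k)/(k+1)$. Since $\hat g\ge 0$, the sequence $\{a_n\}$ is nonnegative and nonincreasing. If $\sum_{n}\hat g(n)/(n+1)=\infty$, then $\li g$ is not defined (hence not in $\bk^1_{\log^\alpha}$), and since $\alpha+1>0$ gives $\log^{\alpha+1}(n+2)\ge \log^{\alpha+1}2>0$, also $K_\alpha(g)=\infty$; so both sides of the asserted equivalence fail and there is nothing to prove. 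Thus we may assume $\sum_n\hat g(n)/(n+1)<\infty$; then $a_n\to 0$, and $\{a_n\}$ satisfies the hypotheses of Theorem~\ref{thm-decreasing} (note $\alpha>-1\ge-1$). That theorem yields $\li g\in\bk^1_{\log^\alpha}$ if and only if $S_\alpha(\li g)<\infty$, together with the two-sided bound $S_\alpha(\li g)/C\le\|\li g\|_{\bk^1_{\log^\alpha}}\le CS_\alpha(\li g)$.

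It therefore suffices to prove $S_\alpha(\li g)\asymp K_\alpha(g)$ with constants depending only on $\alpha$. Since all terms are nonnegative, Tonelli's theorem permits the interchange
\[S_\alpha(\li g)=\sum_{n=0}^\infty\frac{\log^\alpha(n+2)}{n+1}\sum_{k=n}^\infty\frac{\hat g(k)}{k+1}=\sum_{k=0}^\infty\frac{\hat g(k)}{k+1}\,W_\alpha(k),\qquad W_\alpha(k):=\sum_{n=0}^k\frac{\log^\alpha(n+2)}{n+1}.\]
Everything then comes down to the elementary estimate $W_\alpha(k)\asymp\log^{\alpha+1}(k+2)$ for all $k\ge 0$; granting it, $S_\alpha(\li g)\asymp\sum_k\hat g(k)\log^{\alpha+1}(k+2)/(k+1)=K_\alpha(g)$, and combining with the norm bound from the first step finishes the proof.

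The only real work is the estimate for $W_\alpha(k)$, and this is exactly where the strict inequality $\alpha>-1$ enters. The function $t\mapsto\log^\alpha t/t$ is eventually monotone on $[2,\infty)$ — decreasing on all of $[2,\infty)$ when $\alpha\le 0$, and decreasing for $t\ge e^\alpha$ when $\alpha>0$ — so by the integral test $W_\alpha(k)$ differs from $\int_2^{k+2}\frac{\log^\alpha t}{t}\,dt=\frac1{\alpha+1}\big(\log^{\alpha+1}(k+2)-\log^{\alpha+1}2\big)$ by a bounded amount; since $\alpha+1>0$ this integral is comparable to $\log^{\alpha+1}(k+2)$, which is itself bounded below by a positive constant for $k\ge 0$, whence $W_\alpha(k)\asymp\log^{\alpha+1}(k+2)$ for every $k\ge 0$. (For $\alpha=-1$ the integral would instead grow like $\log\log(k+2)$, which is why Theorem~\ref{thm-decreasing} holds for $\alpha\ge-1$ while the present refinement needs $\alpha>-1$.) I expect the only minor technical points to be the case split on the sign of $\alpha$ when invoking monotonicity of $\log^\alpha t/t$, and carrying the Tonelli interchange and the degenerate case $\sum\hat g(n)/(n+1)=\infty$ through cleanly; neither is delicate.
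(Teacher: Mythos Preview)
Your proof is correct and follows essentially the same route as the paper: compute the Taylor coefficients of $\li g$, observe they are nonnegative and nonincreasing, apply Theorem~\ref{thm-decreasing}, interchange the double sum, and reduce to the estimate $\sum_{n=0}^k \log^\alpha(n+2)/(n+1)\asymp\log^{\alpha+1}(k+2)$ for $\alpha>-1$. You are in fact slightly more careful than the paper, which asserts the last estimate without proof and does not explicitly dispose of the degenerate case $\sum_n \hat g(n)/(n+1)=\infty$.
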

\begin{proof}  See Section \ref{section-decreasing}. \end{proof}

In the general case, the integral in \eqref{eq-lib} need not exists, but it certainly exists if $g\in \ho,$ which means  that $g$ is analytic in a neighborhood of the closed disk. By using  Theorem \ref{thm-decreasing} we shall prove that $\lo$ cannot be extended to a bounded operator from $\bk^1_{\log^\alpha}$ to $H(\D)$, if $\alpha<0.$ In the case $\alpha\ge 0,$ every function $g\in \bk^1_{\log^\alpha}$ satisfies \eqref{abs}, whence $\li$ is well defined, and we will show that $\li$ maps this space into $\bk^1_{\log^{\alpha-1}}$,  when $\alpha>0.$ If $\alpha=0$ we need a sort of ``iterated" logarithmic space.
\subsection*{Ces\`aro operator}
 The dual of $H(\mathbb D)$ is equal to $ H(\overline
{\mathbb D\mathstrut}),$ where ``$g\in  H(\overline {\mathbb
D\mathstrut})$" means that $g$ is holomorphic in a neighborhood of
$\overline{\mathbb D\mathstrut}$  (depending on $g$). The duality
pairing is given
\begin{equation}\label{pair}
\langle f,g\rangle =\sum_{n=0}^\infty \hat f(n)\hat
g(n),\end{equation}  where $f(z)=\sum_{n=0}^{\infty}\hat
f(n)z^n\in H(\mathbb D)$  and $ g(z)=\sum_{n=0}^{\infty}\hat
g(n)z^n\in H(\overline {\mathbb D\mathstrut}),$ and the series is absolutely convergent (see, e.g.,~
\cite{LR}).
The Ces\`aro operator is defined on $H(\D)$ as
\begin{equation}\label{eq-ce}\ce f(z)=\sum_{n=0}^\infty z^n\frac1{n+1}\sum_{k=0}^n a_k,\quad f\in H(\D).  \end{equation}
 It is easy to verify that the adjoint of $\ce:H(\D)\mapsto H(\D)$ is equal to $\lo: \ho\mapsto \ho,$ under the pairing \eqref{pair}, and vice versa (see, e.g., \cite{NP}).

The operators $\mathcal C$ and $\lo$ acting on
$H^p$ spaces were first studied by Siskakis  in 1987. In
\cite{S} he proved that $\ce$ is  bounded
on $H^p$ for $1<p<\infty,$ and that $\lo$ can be extended to a bounded operator on $H^p,$ $1<p<\infty,$ and obtained some results on their
spectra and norms. A few years later he  proved the boundedness of
the Ces\`aro operator on $H^1$ (\cite{S1}), while Miao proved its
boundedness on $H^p$ for $0<p<1$ (\cite{Miao}). A short proof of
the boundedness of $\mathcal C$  on $H^p,$ $0<p<\infty$, as well as a stronger result,  can be
also found in Nowak \cite{Nowak}.  However, $H^{\infty}$ is not mapped
into itself by $\mathcal C$ (see \cite{DA}). If we write \eqref{eq-ce} as
\begin{equation*}\label{} z  \ce f(z)=\int_0^z \frac {f(\zeta)}{1-\zeta}\, d\zeta, \end{equation*}
and hence
\begin{equation*}\label{eq-izvod}(z \ce f(z))'=\frac{f(z)}{1-z}, \end{equation*}
we conclude that
 $\ce$ maps $H^\infty$  into the Bloch space (see \cite{DA}).

On the other hand, by using the inequality
 \begin{equation*}\label{} |f(z)|=\mathcal O\Big(\log\frac 2{1-|z|}\Big),\quad f\in \bk, \end{equation*}
and the analogous inequality for $f\in\mathfrak b$ (replace ``$\mathcal O$" with ``$o$"), we get:
\par\medskip\noindent {\bf (C)}
The operator $\ce$ maps the space $\bk$ into $\bk_{\log}$, and $\mathfrak b$ into $\mathfrak b_{\log}.$
\par\medskip

One of our aims is to generalize this assertion to some other values of $\alpha$ and then use assertion {\bf (A)}  together with the duality between $\ce$ and $\lo$ to obtain an alternative
proof of some  results on the action of $\li$ from $\bk^1_{\alpha+1}$ to $\bk^1_\alpha,$
where
\begin{equation}\label{eq-li} \li f(z)=\int_0^1 f(t+(1-t)z)\,dt.\end{equation}
  In particular we have:
  \par\medskip\noindent {\bf (D)}
  The operator $\li$ is well defined on $\bk^1_{\log}$ and maps it into $\bk^1.$
\par\medskip

It should be noted that: (a) $\bk^1\subsetneq H^1;$ (b) $\li$ does not map $\bk^1$ into $H^1$ (see \cite{decreasing});
and
(c) $\li$ maps $\bk$ into BMOA \cite{NP}, which improves an earlier result, namely that $\li$ maps $\mathfrak B $ into $\mathfrak B$ (\cite{DRS, xiao}).

The formula \eqref{eq-li} is obtained from \eqref{eq-lib} by integrating over the straight line joining $z$ and 1. A sufficient (not necessary \cite{novo}) condition for the possibility of such integration is \eqref{abs} $(g=f).$

 In proving some of our results, in particular assertions {\bf (A)} and {\bf (B)}, we use a sequence of polynomials constructed
in \cite{hplq} (see also \cite{coef} and \cite{mono}) to decompose the space into a sum which resembles a sum of finite-dimensional spaces (see Section~ \ref{sec-decomp}).
\section{Some more results}
Some elementary facts concerning the cases when $\li f$ is well defined are collected in the following theorem,
where
\begin{equation*}\label{} \ell^1_{-1}=\Big\{g\in H(\D): \|g\|_{\ell^1_{-1}}=\sum_{n=0}^\infty \frac {|\hat g(n)|}{n+1}<\infty\Big\}.\ \end{equation*}
 \begin{thm}\label{prop} Let $\alpha\in \mathbb R.$ Then:
 \begin{itemize}
\item[(a)] $\bk_{\log^\alpha}\subset \ell^{1}_{-1}$ for all $\alpha;$
\item[(b)] $\bk^1_{\log^\alpha}\subset \ell^1_{-1}$ if and only if
$\alpha\ge0;$
\item[(c)] if $\alpha< 0,$ then $\lo$ cannot be extended to a continuous operator from $\bk^1_{\log^\alpha}$ to $H(\D).$
\end{itemize}
\end{thm}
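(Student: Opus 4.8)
The three parts of Theorem~\ref{prop} are really three separate computations, all reducing to estimates on the Taylor coefficients of the relevant test functions, so I would treat them one at a time. For part~(a), the goal is to show that every $f\in\bk_{\log^\alpha}$ satisfies $\sum_n |\hat f(n)|/(n+1)<\infty$. The natural device is to write $\hat f(n)$ as a coefficient integral: for $1/2<r<1$,
\[
\hat f(n)=\frac1{2\pi}\int_0^{2\pi}f(re^{i\theta})e^{-in\theta}\,d\theta,
\]
but it is cleaner to work with the derivative, using $(n+1)\hat f(n+1)=\widehat{f'}(n)$ and the pointwise bound $|f'(z)|\le C\|f\|_{\bk_{\log^\alpha}}(1-|z|)^{-1}\log^\alpha\frac2{1-|z|}$. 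Integrating the Cauchy estimate $|\widehat{f'}(n)|\le r^{-n}\max_{|z|=r}|f'(z)|$ is too lossy; instead I would pass through the $H^1$-type bound on circles or, more elementarily, integrate $|f'|$ against a radial weight and use that $\sum_n |\hat f(n)|/(n+1)\lesssim |f(0)|+\int_0^1 M_\infty(r,f')\,dr$ after an Abel-summation / Hardy--Littlewood-type argument. Since $\int_0^1 (1-r)^{-1}\log^\alpha\frac2{1-r}\,dr$ diverges, this crude route fails, which tells me the right approach is to exploit cancellation: decompose $f$ using the polynomial blocks from Section~\ref{sec-decomp} so that the $n$-th block has coefficients of size $O(2^{-j}\cdot j^\alpha\cdot\|f\|)$ on a dyadic range $n\sim 2^j$ of length $\sim 2^j$, giving a block contribution to $\sum|\hat f(n)|/(n+1)$ of order $2^j\cdot 2^{-j}\cdot j^\alpha\cdot 2^{-j}\cdot\|f\|=2^{-j}j^\alpha\|f\|$, which is summable for every $\alpha$. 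This is the cleanest path and it reuses machinery already set up in the paper.

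For part~(b), the ``if'' direction ($\alpha\ge0$) is the analogous dyadic-block estimate: a function in $\bk^1_{\log^\alpha}$ has $j$-th block with $\ell^1$-norm of coefficients of order $2^{-j}j^\alpha\|f\|_{\bk^1_{\log^\alpha}}$ up to constants (this is exactly the decomposition characterization of $\bk^1_{\log^\alpha}$ from Section~\ref{sec-decomp}, which I may assume), so the block contribution to $\sum|\hat f(n)|/(n+1)$ is $\lesssim 2^{-j}j^\alpha\cdot 2^{-j}\cdot\|f\|$—wait, more carefully: on a block of length $2^j$ around $n\sim2^j$, $\sum_{n\in\text{block}}|\hat f(n)|/(n+1)\le 2^{-j}\sum_{n\in\text{block}}|\hat f(n)|$, and I need the last sum controlled, which for $\alpha\ge0$ follows by Cauchy--Schwarz or directly from the block structure. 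For the ``only if'' direction ($\alpha<0$ forces failure), I would produce an explicit $f\in\bk^1_{\log^\alpha}\setminus\ell^1_{-1}$. The natural candidate is a lacunary-type or decreasing-coefficient function; in fact Theorem~\ref{thm-decreasing} is tailor-made here: take $a_n=1/(n+1)$ for large $n$ (adjusted to be nonincreasing, tending to $0$). Then $S_\alpha(f)=\sum \log^\alpha(n+2)/((n+1)(n+1))<\infty$ for every $\alpha$—too convergent. I need $a_n$ larger: take $a_n\sim 1/(\log(n+2)\,\cdot\,?)$... The precise choice is $a_n$ with $\sum a_n\log^\alpha(n+2)/(n+1)<\infty$ but $\sum a_n/(n+1)=\infty$; since $\alpha<0$, $\log^\alpha(n+2)\to0$, so e.g. $a_n=1/\log^{\beta}(n+2)$ with $-\alpha<\beta\le 1$ does it: $S_\alpha=\sum 1/((n+1)\log^{\beta-\alpha}(n+2))<\infty$ since $\beta-\alpha>0$... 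I need $\beta-\alpha>1$ for convergence of $\sum 1/(m\log^{\beta-\alpha}m)$—no, that's false, $\sum 1/(m\log^c m)$ diverges for $c\le1$ and converges for $c>1$. So I want $\beta-\alpha>1$ and $\beta\le1$, forcing $\alpha<0$, consistent; pick $\beta=\min(1,(1-\alpha)/1)$ appropriately, e.g. $\beta=1$ works iff $1-\alpha>1$ iff $\alpha<0$. So $a_n=1/\log(n+2)$ (made nonincreasing) gives $f\in\bk^1_{\log^\alpha}$ by Theorem~\ref{thm-decreasing}, while $\sum a_n/(n+1)=\sum 1/((n+1)\log(n+2))=\infty$, i.e. $f\notin\ell^1_{-1}$.

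For part~(c), the claim is that for $\alpha<0$, $\lo$ has no continuous extension $\bk^1_{\log^\alpha}\to H(\D)$. The strategy is a closed-graph / uniform-boundedness argument combined with the failure exhibited in~(b). Since polynomials are dense in $\bk^1_{\log^\alpha}$ and $\lo$ is defined on polynomials by \eqref{eq-lib}, a continuous extension would make $f\mapsto (\lo f)(0)=\sum_{k\ge0}\hat f(k)/(k+1)$ a continuous linear functional on $\bk^1_{\log^\alpha}$ (evaluation at $0$ is continuous on $H(\D)$). Apply this to the nonnegative-coefficient function $f$ constructed in~(b): approximating $f$ by its partial sums $f_N$ in $\bk^1_{\log^\alpha}$, continuity would force $(\lo f_N)(0)=\sum_{k=0}^N a_k/(k+1)$ to converge, contradicting $\sum a_k/(k+1)=\infty$. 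The only point requiring care is that $f_N\to f$ in the $\bk^1_{\log^\alpha}$-norm, i.e. that polynomials are dense and that the partial sums of this particular $f$ converge to it—this follows from the block-decomposition description of $\bk^1_{\log^\alpha}$ and the fact that for nonincreasing coefficients the tail blocks have small norm (again Theorem~\ref{thm-decreasing} gives $\|f-f_N\|_{\bk^1_{\log^\alpha}}\asymp\sum_{n>N}a_n\log^\alpha(n+2)/(n+1)\to0$).

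\textbf{Main obstacle.} The genuine work is in part~(a): proving $\bk_{\log^\alpha}\subset\ell^1_{-1}$ for \emph{all} $\alpha$, including large positive $\alpha$ where the naive integral estimate diverges. The resolution must use the cancellation encoded in the polynomial-block decomposition of Section~\ref{sec-decomp}, turning the pointwise growth bound on $f'$ into an $\ell^1$-control of the coefficients of each dyadic block; once that lemma is invoked the summation over blocks is routine. Parts~(b) and~(c) are then essentially bookkeeping around Theorem~\ref{thm-decreasing} and the standard density of polynomials.
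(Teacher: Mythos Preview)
Your overall architecture is right—dyadic blocks for (a), Hardy's inequality and a decreasing-coefficient test function for (b), and a uniform-boundedness contradiction for (c)—but there is one genuine error and two places where the paper's route is cleaner.

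\textbf{Part (a): the coefficient bound is wrong.} From $f\in\bk_{\log^\alpha}$ and the block decomposition you get $\|V_j*f\|_\infty\lesssim j^\alpha$, and this only yields $|\hat f(k)|\lesssim j^\alpha$ for $k\sim 2^j$ (the trivial bound on a coefficient of a bounded polynomial), \emph{not} $|\hat f(k)|\lesssim 2^{-j}j^\alpha$ as you claim. The fix is to insert a Cauchy--Schwarz step on each block:
\[
\sum_{k=2^j}^{2^{j+1}-1}|\hat f(k)|\le 2^{j/2}\Big(\sum_{k=2^j}^{2^{j+1}-1}|\hat f(k)|^2\Big)^{1/2}\le 2^{j/2}\|V_j*f\|_2\le C\,2^{j/2}j^\alpha,
\]
so the block contribution to $\sum|\hat f(k)|/(k+1)$ is $\lesssim 2^{-j/2}j^\alpha$, still summable for every $\alpha$. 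This is exactly the paper's argument, phrased there without the $V_n$ language: it bounds $M_2(r,f')$ by $M_\infty(r,f')$, extracts the $\ell^2$-norm of the coefficients on a dyadic block at $r=1-2^{-n}$, and then applies Cauchy--Schwarz. Your computation ``$2^j\cdot 2^{-j}\cdot j^\alpha\cdot 2^{-j}$'' should read ``$2^{j/2}\cdot j^\alpha\cdot 2^{-j}$''.

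\textbf{Part (b), the inclusion for $\alpha\ge 0$.} Your block sketch is muddled because $\|V_j*f\|_1$ does not control $\sum_{k\in\text{block}}|\hat f(k)|$ in the right direction without further work. The paper bypasses blocks entirely: Hardy's inequality $\pi M_1(r,f')\ge\sum_n n|\hat f(n)|r^{n-1}/(n+1)$, integrated against $\log^\alpha\frac{2}{1-r}\,r\,dr$ and combined with the normal-function estimate $\int_0^1 r^n\log^\alpha\frac{2}{1-r}\,dr\gtrsim \log^\alpha(n+2)/(n+1)$, gives
\[
\|f\|_{\bk^1_{\log^\alpha}}\ge c\sum_{n}\frac{|\hat f(n)|\log^\alpha(n+2)}{n+1},
\]
which for $\alpha\ge 0$ dominates $\sum|\hat f(n)|/(n+1)$. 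This is a one-line argument once Hardy's inequality is on the table.

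\textbf{Parts (b) ``only if'' and (c).} Your construction coincides with the paper's: $a_n=1/\log(n+2)$ is precisely the choice the paper arrives at (it writes $a_n=\log^{-\varepsilon-\alpha}(n+2)$ and then sets $\varepsilon=1-\alpha$). Your partial-sum approximation $f_N\to f$ is a valid substitute for the paper's dilation family $f_r$; both make $\{\text{approximants}\}$ bounded in $\bk^1_{\log^\alpha}$ while $(\lo\cdot)(0)$ blows up. One point you omit: Theorem~\ref{thm-decreasing} requires $\alpha\ge -1$, so before invoking it you must reduce to $-1<\alpha<0$ via the inclusion $\bk^1_{\log^{\alpha'}}\subset\bk^1_{\log^\alpha}$ for $\alpha<\alpha'$ (a continuous extension on the larger space would restrict to one on the smaller). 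The paper makes this reduction explicit.
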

\begin{proof} See Section \ref{section-decreasing}.   \end{proof}
\begin{rem}
The inclusions in (a) and (b) are continuous. Assertion (c) says much more than simply that $\bk^1_{\log^\alpha}\not\subset \ell^1_{-1}.$
\end{rem}
In the context of the action of $\ce $ and $\li$ some new spaces occur: the space $\bk_{\mathop{\rm logg}}$
is defined by the requirement
\begin{equation*}\label{}|f'(z)|=\mathcal O\Big(\log\log \frac 4{1-|z|}\Big), \end{equation*}
the space $\mathfrak b_{\mathop{\rm logg}}$ defined by replacing ``$\mathcal O$" with ``$o$", and the space
$\bk^1_{\mathop{\rm logg}}$ defined by
\begin{equation*}\label{} \int_\D|f'(z)|\log\log\frac 4{1-|z|}\,dA(z)\ <\infty. \end{equation*}

Our next result is
\begin{thm}\label{th-2} {\rm (a)}
If $\alpha>-1,$ then $\ce$ maps the space $\bk_{\log^{\alpha}}$, resp. $\mathfrak b_{\log^{\alpha}}$, into $\bk_{\log^{\alpha+1}},$ resp. $\mathfrak b_{\log^{\alpha+1}}$.\\[-1.8ex]

{\rm(b)} $\ce$ maps the space $\bk_{\log^{-1}}$, resp. $\mathfrak b_{\log^{-1}}$, into $\bk_{\mathop{\rm logg}}$, resp.  $\mathfrak b_{\mathop{\rm logg}}$.\\[-1.8ex]
\end{thm}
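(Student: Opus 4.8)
The plan is to reduce the assertion about $\mathcal C$ acting on the $\log^\alpha$-Bloch spaces to the pointwise growth estimate for the derivative of $\mathcal C f$, using the representation $(z\,\mathcal C f(z))' = f(z)/(1-z)$ already recorded in the Introduction. Writing $F=\mathcal C f$, one has $F'(z) = (z\,\mathcal C f(z))'/1 - \mathcal C f(z)\cdot$(lower-order terms); more precisely, from $z\,\mathcal C f(z)=\int_0^z f(\zeta)(1-\zeta)^{-1}\,d\zeta$ we get $F(z) + zF'(z) = f(z)/(1-z)$, so $F'(z) = \bigl(f(z)/(1-z) - F(z)\bigr)/z$ for $z\neq 0$, and near the origin $F$ and $F'$ are bounded anyway. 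Since the $\log^\alpha$-Bloch norm only sees the behavior as $|z|\to 1$, it suffices to control $F'(z)$ there, and the dominant term is $f(z)/\bigl(z(1-z)\bigr)$, i.e. essentially $f(z)/(1-z)$.

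The first step is to record the standard growth estimates for $f$ in the relevant spaces. For $f\in \bk_{\log^\alpha}$ with $\alpha>-1$, integrating $|f'(z)| = \mathcal O\bigl((1-|z|)^{-1}\log^\alpha\frac{2}{1-|z|}\bigr)$ along a radius gives
\begin{equation}\label{eq-plan-growth}
|f(z)| = \mathcal O\Bigl(\log^{\alpha+1}\frac{2}{1-|z|}\Bigr)\qquad(\alpha>-1),
\end{equation}
because $\int_0^r (1-t)^{-1}\log^\alpha\frac{2}{1-t}\,dt \asymp \log^{\alpha+1}\frac{2}{1-r}$ when $\alpha>-1$; and for $\alpha=-1$ the same computation yields $|f(z)| = \mathcal O\bigl(\log\log\frac{4}{1-|z|}\bigr)$, since $\int_0^r (1-t)^{-1}\log^{-1}\frac{2}{1-t}\,dt \asymp \log\log\frac{4}{1-r}$. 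The little-oh versions follow verbatim: if $|f'(z)| = o(\cdots)$ then splitting the radial integral at a point near the endpoint shows the corresponding $o$-estimate for $|f(z)|$ as well.

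The second step combines these with the factor $1/(1-z)$. For $\alpha>-1$, using \eqref{eq-plan-growth} and $|1-z|\ge 1-|z|$,
\begin{equation*}
|F'(z)| \le \frac{C}{|z|}\left(\frac{|f(z)|}{1-|z|} + |F(z)|\right) = \mathcal O\Bigl((1-|z|)^{-1}\log^{\alpha+1}\tfrac{2}{1-|z|}\Bigr),
\end{equation*}
where the $|F(z)|$ term is absorbed since $F\in\bk\subset\bk_{\log^{\alpha+1}}$ trivially (indeed $F$ being in the Bloch class, being the Cesàro transform of a Bloch function, follows from the $\alpha=0$ case already noted in the text, or directly from $|f(z)|=\mathcal O(\log\frac{2}{1-|z|})$). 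This gives $\mathcal C f\in\bk_{\log^{\alpha+1}}$, and the $o$-version gives $\mathcal C f\in\mathfrak b_{\log^{\alpha+1}}$ when $f\in\mathfrak b_{\log^\alpha}$. Part (b) is identical with $\log^{\alpha+1}$ replaced by $\log\log\frac{4}{1-|z|}$, which is exactly the defining weight of $\bk_{\mathop{\rm logg}}$. The main thing to be careful about is the behavior near $z=0$ and along the line segment where $1-z$ is small but $|z|$ is not close to $1$: there $f$ is bounded on a neighborhood of that arc away from $z=1$ only if $f\in\ho$, which it need not be, so one genuinely uses the growth bound $|1-z|\ge 1-|z|$ uniformly rather than trying to bound $f$ near the single point $1$. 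Apart from that the argument is a routine one-variable integral estimate, and I expect no real obstacle; the slightly delicate point is verifying the little-oh statements, which requires the standard $\eps$-splitting of the radial integral but nothing more.
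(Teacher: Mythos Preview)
Your approach coincides with the paper's in spirit: bound $|f(z)|$ by radial integration of $f'$, then plug into the identity coming from $(z\,\mathcal C f(z))'=f(z)/(1-z)$. The paper, however, uses the radial derivative $\mathcal R g(z)=(zg(z))'$ rather than the ordinary derivative. After observing (via Theorem~\ref{thm-decomp} and \eqref{rrr}) that the $\bk_{\log^\alpha}$-norm is equivalent to $\sup_z (1-|z|)\log^{-\alpha}\frac{2}{1-|z|}\,|\mathcal R g(z)|$, the clean identity $\mathcal R\,\mathcal C f(z)=f(z)/(1-z)$ finishes the argument with no extra term to absorb.

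Your version does carry an extra term, and the way you dispose of it is the one genuine gap. You write that ``$F\in\bk\subset\bk_{\log^{\alpha+1}}$ trivially'' and justify this by appealing to ``the $\alpha=0$ case'' or to $|f(z)|=\mathcal O(\log\frac{2}{1-|z|})$. Neither works: assertion (C) in the text says $\mathcal C$ maps $\bk$ into $\bk_{\log}$, not into $\bk$; and for $\alpha>0$ a function $f\in\bk_{\log^\alpha}$ need not satisfy $|f(z)|=\mathcal O(\log\frac{2}{1-|z|})$, only $\mathcal O(\log^{\alpha+1}\frac{2}{1-|z|})$. So the claim $F\in\bk$ is unjustified and generally false. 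The fix is easy, though: estimate $|F(z)|$ directly from the integral $zF(z)=\int_0^z f(\zeta)(1-\zeta)^{-1}\,d\zeta$ along the radius, which with your growth bound on $f$ gives
\[
|F(z)|\le C\int_0^{|z|}\frac{\log^{\alpha+1}\frac{2}{1-r}}{1-r}\,dr=\mathcal O\Bigl(\log^{\alpha+2}\tfrac{2}{1-|z|}\Bigr),
\]
and this is dominated by $(1-|z|)^{-1}\log^{\alpha+1}\frac{2}{1-|z|}$, so the absorption goes through. The same remark applies in part~(b) with $\log\log\frac{4}{1-|z|}$ in place of $\log^{\alpha+1}$. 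With that correction your argument is complete; the paper's choice of $\mathcal R$ simply avoids the issue altogether.
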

\begin{proof}  See Section \ref{sect23}. \end{proof}
\begin{rem}
If $f\in \bk_{\log^\alpha}$ and $\alpha<-1,$ then, as it can easily be shown, $f\in A(\D),$ where $A(\D)$ is the disk-algebra, i.e., the subset of $H^\infty$ consisting of those $f$ which have a continuous extension to the closed disk. Moreover, the modulus of continuity of the boundary function $f_*(\zeta),$ $\zeta\in \partial \D,$
satisfies the condition
\[\omega(f_*,t)=\mathcal O\Big(t\Big(\log \frac 2t\Big)^{\alpha+1}\Big),\quad t\downarrow 0.\]
This follows from the inequality
\[\omega(f_*,t)\le C\int_{1-t}^1M_\infty (r,f')\,dr,\]
see \cite[Theorem 2.2]{edinb}. It should be noted that the modulus of continuity of $f_*$ is ``proportional" to that
of $f(z),$ $z\in\D,$ see \cite{ta, ru}.
\end{rem}
Concerning the Libera operator we shall prove, besides Theorem \ref{prop}(c), the following facts.
\begin{thm}\label{th-3}
{\rm (a)} If $\alpha>0,$ then $\li $ is well defined on $\bk^1_{\log^\alpha}$ and maps this space to $\bk^1_{\log^{\alpha-1}}.$\\[-1.8ex]

{\rm (b)}  $\li$ is well defined on $\bk^1_{\mathop{\rm logg}}$ and maps this space into $\bk^{1}_{\log^{-1}}.$ \\[-1.8ex]

{\rm (c)} $\li$ is well defined on $\bk^1$ and maps it into $\bk^1_{\alpha}$ for all $\alpha<-1.$
\end{thm}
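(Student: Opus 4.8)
The plan is to reduce everything to the duality scheme of assertion \textbf{(A)} together with the mapping properties of $\ce$ from Theorem~\ref{th-2}, exploiting that $\li$ is (formally) the adjoint of $\ce$ under the pairing \eqref{pairing}. More precisely, for part (a), fix $\alpha>0$. By Theorem~\ref{prop}(b) every $f\in\bk^1_{\log^\alpha}$ satisfies \eqref{abs}, so the integral in \eqref{eq-lib} converges and $\li f$ is a well-defined element of $H(\D)$; the Taylor-coefficient form $\widehat{\li f}(n)=\sum_{k\ge n}\hat f(k)/(k+1)$ is then legitimate. To see that $\li f\in\bk^1_{\log^{\alpha-1}}$, I would test $\li f$ against the predual–dual pairing: by \textbf{(A)}, $\bk^1_{\log^{\alpha-1}}$ is the dual of $\mathfrak b_{\log^{\alpha-1}}$, so it suffices to bound $|\langle \li f,g\rangle|$ by $C\|f\|_{\bk^1_{\log^\alpha}}\|g\|_{\bk_{\log^{\alpha-1}}}$ for $g$ a polynomial (dense in $\mathfrak b_{\log^{\alpha-1}}$). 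Now $\langle \li f,g\rangle=\langle f,\ce g\rangle$ by the adjointness of $\ce$ and $\li$ under \eqref{pairing}, and Theorem~\ref{th-2}(a) gives $\ce g\in\bk_{\log^\alpha}$ with $\|\ce g\|_{\bk_{\log^\alpha}}\le C\|g\|_{\bk_{\log^{\alpha-1}}}$. Since $\bk^1_{\log^\alpha}$ pairs with $\bk_{\log^\alpha}=(\bk^1_{\log^\alpha})^*$ (again \textbf{(A)}), we get $|\langle f,\ce g\rangle|\le\|f\|_{\bk^1_{\log^\alpha}}\|\ce g\|_{\bk_{\log^\alpha}}$, and the chain of inequalities closes. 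Part (b) is identical with $\bk_{\log^{-1}}$ and $\bk_{\mathrm{logg}}$ in the roles of $\bk_{\log^\alpha}$ and $\bk_{\log^{\alpha+1}}$: one uses Theorem~\ref{th-2}(b) in place of (a), after checking (via Theorem~\ref{prop} or a direct estimate on $\int_\D|f'|\log\log$) that $\bk^1_{\mathrm{logg}}\subset\ell^1_{-1}$, so that $\li$ is well defined there.

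Part (c) needs a separate, more quantitative argument, because for $\alpha=0$ the source space $\bk^1=\bk^1_{\log^0}$ is precisely borderline: Theorem~\ref{prop}(b) still gives $\bk^1\subset\ell^1_{-1}$, so $\li f$ is well defined, but the target $\bk^1_\alpha$ with $\alpha<-1$ is smaller than $\bk^1$, so the adjoint trick as stated would require $\ce$ to map $\bk_\alpha$ into $\bk$, which is false in that direction. Instead I would argue directly using the decreasing-coefficient characterization, or rather its consequence: the key point is that for $f\in\bk^1$ the coefficients decay like $o(1/\log n)$ on average, which forces $K_{\alpha}$-type sums to converge for $\alpha<-1$. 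Concretely, since $\bk^1\subset H^1$ with $\|\hat f\|_{\ell^\infty}\le C\|f\|_{\bk^1}$, and $\widehat{\li f}(n)=\sum_{k\ge n}\hat f(k)/(k+1)$, one estimates $\int_\D|(\li f)'(z)|\,\log^\alpha\tfrac2{1-|z|}\,dA(z)$ by splitting into dyadic annuli and using $|(\li f)'(z)|\le\sum_n n|\widehat{\li f}(n)|\,|z|^{n-1}$ together with the crude bound $|\widehat{\li f}(n)|\le C\|f\|_{\bk^1}\sum_{k\ge n}1/(k+1)^{?}$—but this is too lossy, so the honest route is to apply assertion \textbf{(D)} ($\li$ maps $\bk^1_{\log}$ into $\bk^1$) and the elementary inclusion $\bk^1\subset\bk^1_{\log}$ after a logarithmic regularization, i.e. write $\li=\li$ acting between spaces one step apart and iterate; since $\bk^1_{\log^\beta}\subset\bk^1_{\log^\gamma}$ for $\beta\ge\gamma$, composing with (a) for small positive exponents and passing to the limit yields membership in $\bk^1_\alpha$ for every $\alpha<-1$.

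For the bookkeeping it is cleanest to record once and for all the elementary monotonicity $\|f\|_{\bk^1_{\log^\gamma}}\le\|f\|_{\bk^1_{\log^\beta}}$ when $\beta\ge\gamma$ (immediate from $\log^\gamma\le\log^\beta$ on $[1/(1-|z|)\ge 2]$, up to the finitely many $z$ near $0$ absorbed into $|f(0)|$), and the analogous statement for the $\bk_{\log^\alpha}$-norms, so that all the "resp." clauses about the little-oh spaces $\mathfrak b_{\log^\alpha}$ follow automatically: a bounded operator between the big spaces that sends polynomials into the little space sends the whole little space (= polynomial closure) into the little space. The main obstacle I anticipate is part (c): the adjoint identity $\langle\li f,g\rangle=\langle f,\ce g\rangle$ is only formal unless both sides converge, and near the borderline $\alpha=0$ one must justify the interchange of summation in \eqref{eq-lib} and the limit in \eqref{pairing} with some care—Abel summation plus the decay $\hat f(n)=o(1)$ coming from $\bk^1\subset H^1\subset c_0$ should suffice, but this is the delicate point. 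Everything else is routine once Theorems~\ref{prop} and~\ref{th-2} and assertion~\textbf{(A)} are in hand.
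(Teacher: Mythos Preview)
Your duality argument for parts (a) and (b) is correct, and the paper in fact announces exactly this route in the introduction as an ``alternative proof.'' However, the proof the paper actually gives in Section~\ref{sect23} is different and more direct: it quotes the pointwise inequality
\[
rM_1\bigl(r,(\li f)'\bigr)\le \frac{2}{1-r}\int_r^1 M_1(s,f')\,ds,
\]
multiplies both sides by the weight $\log^{\alpha-1}\tfrac{2}{1-r}$, integrates in $r$, and applies Fubini; the inner integral $\int_0^s(1-r)^{-1}\log^{\alpha-1}\tfrac{2}{1-r}\,dr$ is then $\le C\log^{\alpha}\tfrac{2}{1-s}$, which yields (a) immediately. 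Your approach has the virtue of making the $\ce$/$\li$ symmetry do the work, at the cost of invoking the full duality machinery of Theorem~\ref{thm-dual}; the paper's approach is self-contained once the $M_1$ lemma is granted, and handles all three parts uniformly.

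Your treatment of part (c), however, contains a genuine error. You assert that ``the target $\bk^1_{\log^\alpha}$ with $\alpha<-1$ is smaller than $\bk^1$'' and that the adjoint trick would require $\ce$ to map $\bk_{\log^\alpha}$ into $\bk$, ``which is false in that direction.'' Both claims are backwards. For $\alpha<0$ the weight $\log^\alpha\tfrac{2}{1-|z|}$ tends to $0$ at the boundary, so $\bk^1_{\log^\alpha}\supset\bk^1$: the target is \emph{larger}, which is consistent with the remark in the paper that $\li$ does not map $\bk^1$ into $H^1$. And for $\alpha<-1$ one has $\bk_{\log^\alpha}\subset A(\D)\subset H^\infty$ (this is exactly Remark~2.2), so $\ce:\bk_{\log^\alpha}\to\bk$ \emph{is} bounded, via the known fact that $\ce$ maps $H^\infty$ into $\bk$. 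Thus your duality scheme actually goes through for (c) with no extra work; the convoluted ``iterate and pass to the limit'' argument you sketch is unnecessary and, as written, does not converge to anything. In the paper's approach, (c) is again a one-line consequence of the $M_1$ inequality: when $\alpha<-1$ the inner integral $\int_0^s(1-r)^{-1}\log^{\alpha}\tfrac{2}{1-r}\,dr$ is bounded uniformly in $s$, so the right-hand side is dominated by $C\int_0^1 M_1(s,f')\,ds\le C\|f\|_{\bk^1}$.
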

\begin{proof} See Section \ref{sect23}.  \end{proof}

\begin{thm}\label{thm-dual}
Let $\alpha\in \mathbb R.$ Then the dual of $\mathfrak b_{\log^\alpha}$, resp. $\bk^1_{\log^\alpha},$ is isomorphic
to $\bk^1_{\log^\alpha}$, resp. $\bk_{\log^\alpha}$ under the pairing \eqref{pairing}. Similarly, the dual of $\mathfrak b_{\mathop{\rm logg}},$ resp. $\bk^1_{\mathop{\rm logg}},$ is isomorphic to $\bk^1_{\mathop{\rm logg}},$ resp.  $\bk_{\mathop{\rm logg}},$ under the same pairing.
\end{thm}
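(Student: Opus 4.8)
The plan is to reduce everything to the duality machinery already assembled for $\bk^1_{\log^\alpha}$ in assertion {\bf (A)}, which the excerpt grants us, and then to handle $\bk_{\mathop{\rm logg}}$ and its relatives by the same argument with $\phi(r)=\log\log(4/(1-r))$ in place of $\phi(r)=\log^\alpha(2/(1-r))$. The first observation is that the weight $\phi$ in each case is a positive, increasing, ``regularly varying"-type function on $[0,1)$ for which the standard estimates hold: $\phi(r)\asymp\phi(r^2)$ (doubling at the boundary), $\int_0^1\phi(r)^{-1}\,(1-r)^{-1}\,dr=\infty$ when $\alpha\ge -1$ (and likewise for $\log\log$), and the area integral $\int_\D\phi(|z|)(1-|z|)^k\,dA(z)<\infty$ for $k>0$. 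So the first step is to record these elementary weight estimates uniformly, so that the three families $\bk_{\log^\alpha},\bk^1_{\log^\alpha},\mathfrak b_{\log^\alpha}$ and the three $\log\log$ analogues can be treated in one stroke. I would state a lemma: for any weight $\phi$ in this admissible class, the bilinear form \eqref{pairing} is well defined on $\mathfrak b_\phi\times\bk^1_\phi$ and on $\bk^1_\phi\times\bk_\phi$, with $|\langle f,g\rangle|\le C\|f\|\,\|g\|$.

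Next, the identification $(\mathfrak b_\phi)^*\cong\bk^1_\phi$. One direction is easy: every $g\in\bk^1_\phi$ induces a bounded functional on $\mathfrak b_\phi$ by $f\mapsto\langle f,g\rangle$, using the pairing bound from the lemma together with the fact (noted in the excerpt) that polynomials are dense in $\mathfrak b_\phi$, so the $\lim_{r\uparrow1}$ is an honest limit. For the converse, given $\Lambda\in(\mathfrak b_\phi)^*$, one produces its ``symbol" $g(z)=\sum\Lambda(z^n)z^n$ formally and must show $g\in\bk^1_\phi$ with control of the norm. Here I would use the polynomial decomposition of Section~\ref{sec-decomp} (the blocks from \cite{hplq}): writing $f=\sum_k\Delta_k f$ with $\Delta_k$ the Littlewood--Paley-type projections onto frequencies $\sim 2^k$, the space $\bk^1_\phi$ is, up to equivalence of norms, an $\ell^1$-sum $\sum_k\phi(1-2^{-k})\|\Delta_k f\|_{H^1}$-type expression, while $\mathfrak b_\phi$ is the corresponding $c_0$-type sum with weights $\phi(1-2^{-k})^{-1}\|\cdot\|_{H^\infty}$; since $(H^\infty$-block$)^*$ pairs with $H^1$-block via the $H^1$--BMOA duality only loosely, one instead uses that on each finite-dimensional block the $H^\infty$ and $H^1$ norms of a polynomial of degree $\lesssim 2^k$ are comparable up to an absolute constant (this is the point of choosing the \cite{hplq} polynomials). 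That comparability turns the dual of a $c_0$-sum into the corresponding $\ell^1$-sum, which is exactly $\bk^1_\phi$. This block-duality step is the main obstacle: one must check that the projections $\Delta_k$ act boundedly on all the relevant spaces and that the implied constants do not degrade with $k$ despite the slowly varying weight $\phi$.

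The identification $(\bk^1_\phi)^*\cong\bk_\phi$ then follows by a parallel but easier argument: a function $g\in\bk_\phi$ gives a bounded functional on $\bk^1_\phi$ by the pairing bound; conversely a functional on $\bk^1_\phi$, tested against the reproducing-type kernels or again against the blocks $\Delta_k$, yields a symbol whose growth is controlled by the $\ell^1$--$\ell^\infty$ block duality, i.e.\ $\sup_k\phi(1-2^{-k})^{-1}\|\Delta_k g\|_\infty<\infty$, which one translates back to the pointwise condition $|g'(z)|=\mathcal O((1-|z|)^{-1}\phi(|z|))$ defining $\bk_\phi$ by the standard equivalence between the blockwise description and the derivative-growth description of Bloch-type spaces. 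Finally, for the $\log\log$ case one verifies that $\phi(r)=\log\log(4/(1-r))$ lies in the admissible class (it is increasing, doubling at the boundary since $\log\log\frac{4}{1-r^2}\asymp\log\log\frac{4}{1-r}$, and satisfies the required integrability and non-integrability properties), so every step above applies verbatim; the isomorphisms $(\mathfrak b_{\mathop{\rm logg}})^*\cong\bk^1_{\mathop{\rm logg}}$ and $(\bk^1_{\mathop{\rm logg}})^*\cong\bk_{\mathop{\rm logg}}$ come out with no extra work. Throughout, the constants in the norm equivalences depend only on $\alpha$ (or are absolute in the $\log\log$ case), which gives the asserted isomorphism-of-Banach-spaces statement rather than a mere linear isomorphism.
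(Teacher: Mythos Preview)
Your overall strategy---block decomposition via the $V_n$ polynomials, then $c_0/\ell^1/\ell^\infty$ duality blockwise---is exactly the paper's. But there is a genuine gap at the step you flag as ``the main obstacle.'' The claim that ``on each finite-dimensional block the $H^\infty$ and $H^1$ norms of a polynomial of degree $\lesssim 2^k$ are comparable up to an absolute constant'' is simply false: the Dirichlet kernel $\sum_{j=0}^{n}z^j$ has $H^1$ norm $\asymp\log n$ and $H^\infty$ norm $n+1$. Nothing about the choice of the $V_n$ from \cite{hplq} changes this; their role is to give uniform boundedness of $f\mapsto V_n*f$ on $H^p$ (properties \eqref{I}--\eqref{III}), not to collapse distinct $H^p$ norms on blocks. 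So your proposed mechanism for turning the $c_0$-sum dual into an $\ell^1$-sum does not work as stated.

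What the paper does instead is keep track of the \emph{correct} dual at the block level. One writes $\mathfrak b_{\log^\alpha}=v_0(\lambda,A(\D))$ with $\lambda_n=(n+1)^\alpha$ (Theorem~\ref{thm-decomp}), and proves an abstract result: for a homogeneous $X$ with the Abel approximation property, $(v_0(\lambda,X))'=V^1(\lambda,X')$ and $(V^1(\lambda,X))'=V^\infty(\lambda,X')$. The remaining work is to show $\|V_n*g\|_{A(\D)'}\asymp\|V_n*g\|_{H^1}$, which is done via Hahn--Banach, the Riesz representation of $A(\D)'$ by measures, and an integral mean estimate for $\mathcal R g$; this yields $V^1(\lambda,A(\D)')=V^1(\lambda,H^1)=\bk^1_{\log^\alpha}$. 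For the second duality one takes $X=H^1$ and uses the inclusions $H^\infty\subset (H^1)'\subset\bk$ together with the blockwise equivalence of the $H^\infty$ and Bloch norms (which \emph{is} true, unlike $H^1$ vs.\ $H^\infty$). In short, the blockwise passage is not ``$H^1\asymp H^\infty$ on blocks'' but rather ``$X'$ on blocks $\asymp$ the target space on blocks,'' proved separately in each direction. Your plan can be repaired by inserting exactly this step; the $\log\log$ case then follows as you indicate.
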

\begin{proof} See Section \ref{sec-dual}.  \end{proof}
\begin{rem}
The phrase ``the dual of $X$ is isomorphic to $Y$ under the pairing \eqref{pairing}" means that  if $f\in X$ and $g\in Y,$ then the limit in \eqref{pairing} exists and the functional $\Phi(f)=\langle f,g\rangle$ is bounded on $X;$ and on the other hand, if $\Phi\in X^*,$ then there exists $g\in Y$ such that $\Phi(f)=\langle f,g\rangle$, and moreover, there exists a constant $C$ independent of $g$ such that
$\|g\|_Y/C\le \|\Phi\|\le C\|g\|_Y$.
\end{rem}

As an application of Theorems \ref{th-2}, \ref{th-3}, and \ref{thm-dual}, one can prove the following fact.
\begin{thm}
Let $\alpha>0.$ Then the adjoint (with respect to \eqref{pairing}) of the operator $\li: \bk^1_{\log^\alpha}\mapsto \bk^1_{\log^{\alpha-1}}$ is equal
to $\ce:\bk_{\log^{\alpha-1}}\mapsto \bk_{\log^{\alpha}}.$ The adjoint of the operator $\ce:\mathfrak b_{\log^{\alpha-1}}\mapsto \mathfrak b_{\log^\alpha}$ is equal to $\li: \bk^1_{\log^\alpha}\mapsto \bk^1_{\log^{\alpha-1}}$. The analogous assertions hold in the case when $\alpha=0.$
\end{thm}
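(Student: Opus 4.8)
The plan is to obtain the statement as a purely formal consequence of Theorems~\ref{th-2}, \ref{th-3} and \ref{thm-dual}: these already supply the boundedness of all the operators involved and the identification, under the pairing~\eqref{pairing}, of all the relevant dual spaces, after which the equality of the two operators is checked simply by evaluating \eqref{pairing} on the monomials $z^k$.

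Fix $\alpha>0$. By Theorem~\ref{th-3}(a) the operator $\li\colon\bk^1_{\log^\alpha}\to\bk^1_{\log^{\alpha-1}}$ is bounded, and by Theorem~\ref{th-2}(a) both $\ce\colon\bk_{\log^{\alpha-1}}\to\bk_{\log^\alpha}$ and $\ce\colon\mathfrak b_{\log^{\alpha-1}}\to\mathfrak b_{\log^\alpha}$ are bounded. Theorem~\ref{thm-dual} identifies $(\bk^1_{\log^{\alpha-1}})^{*}$ with $\bk_{\log^{\alpha-1}}$, $(\bk^1_{\log^\alpha})^{*}$ with $\bk_{\log^\alpha}$, $(\mathfrak b_{\log^\alpha})^{*}$ with $\bk^1_{\log^\alpha}$, and $(\mathfrak b_{\log^{\alpha-1}})^{*}$ with $\bk^1_{\log^{\alpha-1}}$, in each case via \eqref{pairing}. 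Hence, read through these identifications, the Banach adjoint of $\li\colon\bk^1_{\log^\alpha}\to\bk^1_{\log^{\alpha-1}}$ is a bounded operator $\bk_{\log^{\alpha-1}}\to\bk_{\log^\alpha}$ and the adjoint of $\ce\colon\mathfrak b_{\log^{\alpha-1}}\to\mathfrak b_{\log^\alpha}$ is a bounded operator $\bk^1_{\log^\alpha}\to\bk^1_{\log^{\alpha-1}}$; thus each asserted equality is at least meaningful, and it remains only to verify it.

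For the first equality, fix $g\in\bk_{\log^{\alpha-1}}$ and (using Theorem~\ref{thm-dual}) let $\tilde g\in\bk_{\log^\alpha}$ be the unique function with $\langle\li f,g\rangle=\langle f,\tilde g\rangle$ for every $f\in\bk^1_{\log^\alpha}$ (unique because $\langle z^k,\cdot\rangle$ recovers the $k$-th Taylor coefficient), so that $g\mapsto\tilde g$ is precisely the adjoint of $\li$ transported through the isomorphisms. Since $z^k\in\bk^1_{\log^\alpha}$ and, by \eqref{eq-lib}, $\li(z^k)=\frac1{k+1}\sum_{n=0}^{k}z^n$ is a polynomial, the limit in \eqref{pairing} collapses to a finite sum and
\[\widehat{\tilde g}(k)=\langle z^k,\tilde g\rangle=\langle\li(z^k),g\rangle=\frac1{k+1}\sum_{n=0}^{k}\hat g(n)=\widehat{\ce g}(k)\qquad(k\ge0),\]
whence $\tilde g=\ce g$. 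For the second equality, fix $f\in\bk^1_{\log^\alpha}$ and let $\tilde f\in\bk^1_{\log^{\alpha-1}}$ be the function with $\langle\ce h,f\rangle=\langle h,\tilde f\rangle$ for all $h\in\mathfrak b_{\log^{\alpha-1}}$. Taking $h=z^k\in\mathfrak b_{\log^{\alpha-1}}$ we have $\ce(z^k)=\sum_{n\ge k}z^n/(n+1)$, which is no longer a polynomial; but $f\in\ell^1_{-1}$ by Theorem~\ref{prop}(b) (here $\alpha\ge0$), so $\sum_{n\ge k}\hat f(n)r^{2n}/(n+1)$ converges uniformly for $r\in[0,1]$, and letting $r\uparrow1$ gives $\langle\ce(z^k),f\rangle=\sum_{n\ge k}\hat f(n)/(n+1)=\widehat{\li f}(k)$. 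Hence $\widehat{\tilde f}(k)=\langle z^k,\tilde f\rangle=\widehat{\li f}(k)$ for all $k$, i.e.\ $\tilde f=\li f$.

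The case $\alpha=0$ is handled verbatim, with $\bk_{\log^{-1}},\mathfrak b_{\log^{-1}},\bk^1_{\log^{-1}}$ in the role of $\bk_{\log^{\alpha-1}},\mathfrak b_{\log^{\alpha-1}},\bk^1_{\log^{\alpha-1}}$ and $\bk_{\mathop{\rm logg}},\mathfrak b_{\mathop{\rm logg}},\bk^1_{\mathop{\rm logg}}$ in the role of $\bk_{\log^\alpha},\mathfrak b_{\log^\alpha},\bk^1_{\log^\alpha}$: boundedness now comes from Theorems~\ref{th-2}(b) and \ref{th-3}(b), the dualities from the second half of Theorem~\ref{thm-dual}, and the inclusion $\bk^1_{\mathop{\rm logg}}\subset\ell^1_{-1}$ required for the $\ce(z^k)$ computation holds because $\log\log\frac{4}{1-r}$ is bounded below by a positive constant, so $\bk^1_{\mathop{\rm logg}}\subset\bk^1\subset\ell^1_{-1}$ by Theorem~\ref{prop}(b). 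The whole proof is thus formal once the quoted theorems are available; the only points that demand attention are the bookkeeping of which space is the dual of which (and hence the direction of each adjoint) and the interchange of limit and summation in $\langle\ce(z^k),f\rangle$, which is exactly where Theorem~\ref{prop}(b) is used.
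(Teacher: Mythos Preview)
Your proof is correct and follows precisely the route the paper indicates: the paper itself does not supply a proof of this theorem, merely stating that it is ``an application of Theorems \ref{th-2}, \ref{th-3}, and \ref{thm-dual}'', and you have carried out that application in full detail. The only ingredient you invoke beyond those three theorems is Theorem~\ref{prop}(b), used to justify the limit interchange in $\langle\ce(z^k),f\rangle$; this is a natural auxiliary step and not a deviation from the intended argument.
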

\section{Decompositions}\label{sec-decomp}
In \cite{hplq}, a sequence $\{V_n\}_0^\infty$  was constructed in the following way.

Let $\omega$ be a $C^\infty$-function on $\mathbb R$ such that
\begin{enumerate}
\item $\omega(t)=1$ for $t\le 1,$
\item $\omega(t)=0$ for $t\ge 2,$
\item $\omega$ is decreasing and positive on the interval $(1,2).$
\end{enumerate}

Let $\varphi(t)=\omega(t/2)-\omega(t),$
and let $V_0(z)=1+z,$ and, for $n\ge 1,$
\[V_n(z)=\sum_{k=0}^\infty \varphi(k/2^{n-1})z^k=\sum_{k=2^{n-1}}^{2^{n+1}-1}\varphi(k/2^{n-1})z^k.\]

The polynomials $V_n $ have the following properties:
\begin{align}\label{I}&
g(z)=\sum_{n=0}^\infty V_n*g(z), \ \text{ for $g\in H(\mathbb D)$};\\&\label{II}
\|V_n*g\|_p\le C\|g\|_p,\ \text{ for $g\in H^p,\ p>0$};\\&\label{III}
\|V_n\|_p\asymp 2^{n(1-1/p)},\ \text{for all $p>0$,}
 \end{align}
where $*$ denotes the Hadamard product.
Here $\|h\|_p$ denotes the norm in the $p$-Hardy space $H^p,$
\begin{align*}\|h\|_p&=\sup_{0<r<1}\bigg(\frac 1{2\pi}\int_0^{2\pi}|h(re^{i\theta})|\,d\theta\bigg)^{1/p}   \\&=
\sup_{0<r<1}M_p(r,g).
 \end{align*}

We need additional properties.
\begin{lem}\label{stud}
Let $P(z)=\sum_{k=m}^j a_k z^k,$ $m<j.$ Then
\[r^j\|P\|_p\le M_p(r,P)\le r^m\|P\|_p, \quad 0<r<1.\]
\end{lem}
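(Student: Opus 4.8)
The plan is to reduce both inequalities to the single standard fact that $\rho\mapsto M_p(\rho,h)$ is nondecreasing on $(0,\infty)$ whenever $h$ is entire (equivalently, $|h|^p$ is subharmonic for every $p>0$, since $\log|h|$ is subharmonic), by means of two elementary manipulations: pulling out the common factor $z^m$, and passing to the reversed (reciprocal) polynomial.

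First I would factor $P(z)=z^m R(z)$, where $R(z)=\sum_{l=0}^{j-m}a_{m+l}z^{l}$ is a polynomial supported on the powers $z^0,\dots,z^{j-m}$. Since $|P(re^{i\theta})|=r^m|R(re^{i\theta})|$ for every $r\in(0,1]$, one has $M_p(r,P)=r^mM_p(r,R)$; letting $r\uparrow1$, and using that $M_p(\cdot,R)$ is nondecreasing (so that the $H^p$-norm of a polynomial equals its boundary value $M_p(1,\cdot)$, which is legitimate because $P(re^{i\theta})\to P(e^{i\theta})$ uniformly), gives also $\|P\|_p=\|R\|_p$. Hence it suffices to prove $r^{\,j-m}\|R\|_p\le M_p(r,R)\le\|R\|_p$ and then multiply through by $r^m$.

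The right-hand inequality $M_p(r,R)\le M_p(1,R)=\|R\|_p$ is immediate from monotonicity of $M_p(\cdot,R)$. For the left-hand one I would introduce the reversed polynomial $\widetilde R(z)=z^{\,j-m}R(1/z)=\sum_{l=0}^{j-m}a_{m+l}z^{\,j-m-l}$, again supported on the powers $0,\dots,j-m$. On the unit circle $|\widetilde R(e^{i\theta})|=|R(e^{-i\theta})|$, so $\|\widetilde R\|_p=\|R\|_p$ after the substitution $\theta\mapsto-\theta$; and evaluating at $z=r^{-1}e^{i\theta}$ yields $|\widetilde R(r^{-1}e^{i\theta})|=r^{-(j-m)}|R(re^{-i\theta})|$, hence $M_p(1/r,\widetilde R)=r^{-(j-m)}M_p(r,R)$. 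Since $1/r\ge1$ and $M_p(\cdot,\widetilde R)$ is nondecreasing, $M_p(1/r,\widetilde R)\ge M_p(1,\widetilde R)=\|\widetilde R\|_p=\|R\|_p$, which rearranges to $M_p(r,R)\ge r^{\,j-m}\|R\|_p$. Multiplying the two estimates for $R$ by $r^m$ then gives exactly $r^j\|P\|_p\le M_p(r,P)\le r^m\|P\|_p$.

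There is no genuine obstacle here; the only points requiring a word of care are the identification of $\|\cdot\|_p=\sup_{0<r<1}M_p(r,\cdot)$ with the boundary value $M_p(1,\cdot)$ for polynomials, and the invocation of the monotonicity of $M_p(\rho,h)$ in $\rho$ for entire $h$ and all $p>0$ — both entirely standard. (Note that attempting the reversal directly on $P$, without first extracting $z^m$, produces only the weaker bound $M_p(r,P)\ge r^{m+j}\|P\|_p$, so the factorization step is essential.)
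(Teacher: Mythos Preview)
Your proof is correct. The paper itself does not supply a proof of this lemma; it is stated as a known elementary fact and then immediately applied to $P=V_n*g'$ to obtain \eqref{IV}. Your argument --- factoring out $z^m$, using monotonicity of $\rho\mapsto M_p(\rho,h)$ for entire $h$ to get the upper bound, and passing to the reciprocal polynomial together with the same monotonicity for the lower bound --- is a standard and complete justification.

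One small remark on your closing parenthetical: if one reverses $P$ directly via $\widetilde P(z)=z^{\,j}P(1/z)$ (a polynomial of degree $j-m$), then $M_p(1/r,\widetilde P)=r^{-j}M_p(r,P)\ge\|\widetilde P\|_p=\|P\|_p$ already yields the sharp lower bound $M_p(r,P)\ge r^{\,j}\|P\|_p$; the weaker $r^{m+j}$ bound only arises if one uses $z^{m+j}P(1/z)$ instead. The extraction of $z^m$ is, however, genuinely needed for the \emph{upper} bound, so your overall structure is the natural one.
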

When applied to the polynomial $P=V_n*g',$ this gives:
\begin{align}\label{IV} & r^{2^{n+1}-1}\|V_n*g'\|_p\le M_p(r,V_n*g')\le r^{2^{n-1}-1}\|V_n*g'\|_p\ \text{for $n\ge 1$.}
\end{align}

Another inequality will be used (see \cite[Exercise 7.3.5]{mono}):
\begin{equation}\label{V}2^{n-1}\|V_n*g\|_p/C\le  \|V_n*g'\|_p \le C2^{n+1}\|V_n*g\|_p\ \text{for $n\ge 1$}, \end{equation}
where $C$ is a constant independent of $n$ and $g$.
\begin{thm}\label{thm-decomp} Let $\alpha\in \mathbb R,$ and $f\in H(\D)$. Then:\\[-1.8ex]
\begin{itemize}
\item[(i)] $f\in \bk_{\log^\alpha} $ if and only if $\sup_{n\ge0}(n+1)^{-\alpha}\|V_n*f\|_\infty<\infty.$\\[-1.8ex]

 \item[(ii)] $f\in \mathfrak b_{\log^\alpha} $ if and only if $\lim_{n\to\infty}(n+1)^{-\alpha}\|V_n*f\|_\infty=0.$\\[-1.8ex]

 \item[(iii)] $f\in \bk^1_{\log^\alpha}$ if and only if $\sum_{n=0}^\infty (n+1)^{\alpha}\|V_n*f\|_1<\infty.$
 \end{itemize}
 Moreover, the inequality $$C^{-1}\|f\|_{\bk_{\log^\alpha}}\le \sup_{n\ge 0}(n+1)^{-\alpha}\|V_n*g\|_\infty\le C^{-1}\|f\|_{\bk_{\log^\alpha}}$$ holds, where $C$ is independent of $f.$ The analogous inequality holds in the case of {\rm (iii)} as well.
\end{thm}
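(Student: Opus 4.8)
The plan is to exploit the standard Littlewood--Paley-type characterization of the Bloch-type and Besov-type norms via the growth of $f'$, and then transfer it to the polynomial blocks $V_n\ast f$ using the two-sided estimates \eqref{IV} and \eqref{V}. First I would reduce each of the three equivalences to a statement about the ``block derivative norms'' $2^n\|V_n\ast f\|_1$ or $\|V_n\ast f\|_\infty$: by \eqref{V} we have $2^n\|V_n\ast f\|_p\asymp\|V_n\ast f'\|_p$ (for $n\ge1$; the $n=0$ term is harmless since $V_0$ is a fixed polynomial), so (iii) is equivalent to $\sum_n (n+1)^{\alpha}2^{-n}\|V_n\ast f'\|_1<\infty$, and similarly for (i), (ii) with the $\|\cdot\|_\infty$ norm. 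The point is then to compare $\sum_n (n+1)^\alpha 2^{-n}\|V_n\ast f'\|_1$ with $\int_\D|f'(z)|\log^\alpha\frac 2{1-|z|}\,dA(z)=\int_0^1 M_1(r,f')\,\log^\alpha\frac2{1-r}\cdot 2r\,dr$, and likewise $\sup_n (n+1)^{-\alpha}\|V_n\ast f\|_\infty$ with $\sup_z (1-|z|)\log^{-\alpha}\frac1{1-|z|}|f'(z)|$ after absorbing the extra factor of $2^{-n}\asymp(1-|z|)$ on the relevant annulus.

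The key technical step is a dyadic discretization of these integrals/suprema. Writing $M_p(r,f')=M_p\big(r,\sum_m V_m\ast f'\big)$ and using \eqref{IV}, on the annulus $1-2^{-n}\le|z|\le 1-2^{-n-1}$ the quantity $M_p(r,f')$ is comparable, up to summing a geometrically weighted tail, to $\|V_n\ast f'\|_p$: indeed $M_p(r,V_m\ast f')\le r^{2^{m-1}-1}\|V_m\ast f'\|_p$ decays rapidly in $m$ for $m$ past $n$, and $M_p(r,V_m\ast f')\ge r^{2^{m+1}-1}\|V_m\ast f'\|_p$ is bounded below for $m\le n$; combining these with \eqref{II} one gets $M_p(r,f')\asymp\sup_{m}\big(\text{localized block norms}\big)$ for the $\|\cdot\|_\infty$ case, and in the $L^1$ case one gets the two-sided sum estimate $M_1(r,f')\le\sum_m r^{2^{m-1}-1}\|V_m\ast f'\|_1$ together with the lower bound from any single block. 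Then $\log^\alpha\frac2{1-r}\asymp (n+1)^\alpha$ on that annulus and $dr\asymp 2^{-n}$, so the integral over the annulus contributes $\asymp (n+1)^\alpha 2^{-n}\|V_n\ast f'\|_1$ up to cross terms; summing in $n$ and interchanging the order of summation (a Fubini/Abel rearrangement, where the geometric decay of $r^{2^{m\pm1}}$ guarantees the double sum converges and the cross terms are dominated by the diagonal) yields $\int_\D|f'|\log^\alpha\frac2{1-|z|}\,dA\asymp\sum_n(n+1)^\alpha 2^{-n}\|V_n\ast f'\|_1\asymp\sum_n(n+1)^\alpha\|V_n\ast f\|_1$. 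The cases (i) and (ii) are parallel but easier: for (i) take the supremum instead of the sum, and (ii) follows from (i) by a routine density/$\eps$-$N$ argument, noting that $\mathfrak b_{\log^\alpha}$ is the polynomial closure (stated in the introduction) and $V_n\ast p=0$ eventually for a polynomial $p$.

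I expect the main obstacle to be the bookkeeping in the $L^1$ estimate of (iii): unlike the $\sup$-norm case, where disjoint annuli decouple cleanly, here one must control the off-diagonal interaction between $M_1(r,f')$ on the $n$-th annulus and the blocks $V_m\ast f'$ with $m\ne n$, and then show that after integrating against the slowly varying weight $\log^\alpha\frac2{1-r}$ and summing over $n$, the resulting double sum collapses to the diagonal sum $\sum_n(n+1)^\alpha\|V_n\ast f\|_1$ with constants independent of $f$. The weight being only slowly varying (so $\log^\alpha\frac2{1-r}\asymp(n+1)^\alpha$ uniformly on the dyadic annulus, with the implied constants not depending on $n$ for $\alpha$ fixed) is exactly what makes this work, and the geometric factors $r^{2^{m\pm1}}$ in \eqref{IV} provide the summability; care is needed when $\alpha<0$ so that $(n+1)^\alpha$ is decreasing, but since it is still slowly varying the same comparison on each annulus holds. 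Once the three norm-equivalences are established, the ``moreover'' statement is immediate, with $C$ the product of the constants appearing in \eqref{II}, \eqref{IV}, \eqref{V} and the dyadic comparison of the weight.
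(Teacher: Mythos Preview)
Your plan is correct and is essentially the paper's approach: both reduce to comparing $M_p(r,f')$ with the block norms $\|V_n\ast f'\|_p$ via \eqref{I}, \eqref{II}, \eqref{IV}, then convert to $\|V_n\ast f\|_p$ via \eqref{V}, and finally match the weight $\log^{\pm\alpha}\tfrac{2}{1-r}$ to $(n+1)^{\pm\alpha}$ on dyadic shells. The only difference is packaging: the ``main obstacle'' you anticipate---the off-diagonal bookkeeping in the $L^1$ case and the uniform comparison of the slowly varying weight---is absorbed in the paper into a single cited lemma (Lemma~\ref{reform}, a reformulation of \cite[Proposition~4.1]{stu}) about normal functions $\varphi$, which gives directly $\|F\|_{L^q(0,1)}\asymp\|\{\varphi(2^{-n})\lambda_n\}\|_{\ell^q}$ for both $F=F_1$ (the sup form, used for the lower bound) and $F=F_2$ (the sum form, used for the upper bound); applying it with $\varphi(x)=x\log^{\mp\alpha}(2/x)$, $\lambda_n=2^n\|V_n\ast f\|_p$, and $q=\infty$ or $q=1$ yields (i) and (iii) in one stroke, and (ii) follows exactly as you say by density of polynomials.
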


For the proof we need the following reformulation of \cite[Proposition 4.1]{stu}.
\begin{lem}\label{reform}
Let $\varphi $ be a continuous function on the interval $(0,1]$  such that
$\varphi(x)/x^\gamma$ $(0<x<1)$ is nonincreasing, and $\varphi(x)/x^\beta$ $(0<x<1)$ is nondecreasing,
 where $\beta$ and $\gamma$ are positive constants independent of $x.$ \footnote{\em Following Shields and Williams \cite{SW}, we call such a function {\bf normal.}} Let
 $$F_1(r)=(1-r)^{-1/q}\varphi(1-r)\sup_{n\ge 1}\lambda_n r^{2^{n+1}-1},$$ $$F_2(r)=(1-r)^{1/q}\varphi(1-r)\sum_{n=0}^\infty \lambda_n  r^{2^{n-1}-1},$$ where $\lambda_n\ge0,$ $0<q\le\infty.$ If $F=F_1$ or $F=F_2,$ then
 \[C^{-1}\|F\|_{L^q(0,1)}\le \|\{\varphi(2^{-n})\lambda_n\}\|_{\ell^q}\le C\|F\|_{L^q(0,1)}.\]
\end{lem}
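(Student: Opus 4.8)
The final statement to prove is Lemma~\ref{reform}, a reformulation of a known result from \cite{stu}. The plan is to reduce both cases $F=F_1$ and $F=F_2$ to the cited Proposition~4.1 of \cite{stu} by identifying the sums and suprema appearing here with the quantities treated there, and to fill in the dictionary between the two formulations. I would proceed in three steps.

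First, I would recall the precise statement of \cite[Proposition 4.1]{stu} and set up notation, writing $\mu_n=\varphi(2^{-n})\lambda_n$ for the target sequence. The key observation is that the exponents $2^{n+1}-1$ and $2^{n-1}-1$ are comparable to $2^n$ in the relevant sense: for the supremum in $F_1$, after the substitution $s=1-r$ one has $r^{2^{n+1}-1}\asymp r^{2^n}\asymp e^{-2^n s}$ on compacts, and more to the point $\sup_n \lambda_n r^{2^{n+1}-1}$ is, up to shifting the index $n\mapsto n\pm 1$, the same as $\sup_n \lambda_n r^{2^n}$; since the hypothesis on $\varphi$ (normality) gives $\varphi(2^{-n-1})\asymp\varphi(2^{-n})$ with constants independent of $n$, such index shifts change $\|\{\varphi(2^{-n})\lambda_n\}\|_{\ell^q}$ only by a fixed factor. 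The same remark handles the sum $\sum_n\lambda_n r^{2^{n-1}-1}$ in $F_2$. I would state this comparison as a short sublemma (or inline estimate) and use it to align the $F_i$ here with the functions in \cite{stu}.

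Second, I would treat the two cases. For $F_2$: the factor $(1-r)^{1/q}\varphi(1-r)$ together with the dyadic sum $\sum_n\lambda_n r^{2^{n-1}-1}$ is exactly of the form controlled in \cite[Proposition 4.1]{stu}, where the normality of $\varphi$ (i.e.\ $\varphi(x)/x^\gamma$ nonincreasing and $\varphi(x)/x^\beta$ nondecreasing) is precisely the hypothesis needed so that the weight interacts correctly with the geometric rate of the exponents $2^n$. For $F_1$: here the factor is $(1-r)^{-1/q}\varphi(1-r)$ and one has a supremum rather than a sum; this is the ``dual'' side of the same proposition, and I would either quote the corresponding half of \cite[Proposition 4.1]{stu} directly or derive it from the $F_2$ case by a standard duality/monotonicity argument (comparing $\sup_n$ against $\big(\sum_n(\cdot)^q\big)^{1/q}$ and using that consecutive terms $\lambda_n r^{2^{n+1}-1}$ are well separated in scale so the sup and the $\ell^q$-sum of the evaluated function are comparable after integration). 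In both cases the conclusion $C^{-1}\|F\|_{L^q(0,1)}\le\|\{\varphi(2^{-n})\lambda_n\}\|_{\ell^q}\le C\|F\|_{L^q(0,1)}$ follows, with $C$ depending only on $q,\beta,\gamma$.

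The main obstacle I expect is purely bookkeeping rather than conceptual: matching the exact normalization and the exact form of the exponents in \cite[Proposition 4.1]{stu} to the $2^{n\pm1}-1$ appearing here, and verifying that all the index shifts and the passage between $\sup$ and $\ell^q$-sums cost only constants independent of $\{\lambda_n\}$ and of $n$. The normality hypothesis on $\varphi$ is what makes these shifts harmless, so the one place to be careful is to invoke it (in the form $\varphi(2^{-n})\asymp\varphi(2^{-n\pm1})$ and $\varphi(1-r)\asymp\varphi(2^{-n})$ for $1-r\asymp 2^{-n}$) at each step. No new ideas beyond \cite{stu} are needed; the lemma is genuinely a reformulation, and the proof is essentially the verification of that claim.
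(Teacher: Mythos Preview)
Your proposal is appropriate: the paper does not actually give a proof of this lemma, it simply states it as ``the following reformulation of \cite[Proposition 4.1]{stu}'' and uses it without further justification. Your plan to verify the reformulation by matching exponents, shifting indices, and invoking normality of $\varphi$ to absorb the resulting constants is exactly the routine checking the paper leaves to the reader, and your identification of the only delicate point (that $\varphi(2^{-n})\asymp\varphi(2^{-n\pm1})$ and $\varphi(1-r)\asymp\varphi(2^{-n})$ for $1-r\asymp 2^{-n}$) is correct.
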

\medskip
\begin{proof}[Proof of Theorem \ref{thm-decomp}]
 {\em Case {\rm (i)}.}
 Let $\varphi(x)=x\log^{-\alpha}(2/x),$ and $q=\infty.$ That $\varphi$ is normal follows from
from the relation
 $$\lim_{x\downarrow 0}\frac{x\varphi_\alpha'(x)}{\varphi_\alpha(x)}=1.$$
 Let $\lambda_n=2^n\|V_n*f\|_\infty.$
 By \eqref{I}, \eqref{II}, \eqref{IV}, and \eqref{V}, we have
 \[C^{-1}|\hat f(1)|+C^{-1}\sup_{n\ge 1}\lambda_nr^{2^{n+1}-1}\le M_\infty(r,f')\le C|\hat f(1)|+C\sum_{n=1}^\infty \lambda_n r^{2^{n-1}-1}.\]
 Hence, by Lemma \ref{reform}, we obtain the desired result.

 \par\smallskip
 {\em Case {\rm (ii)}.} In this case we can proceed in two ways:

 $1^\circ$ Modify the proof of Lemma \ref{reform} to get
 the inequalities
 \[ C^{-1}\|F\|_{C_0[0,1]}\le \|\{\varphi(2^{-n})\lambda_n\}\|_{\mathfrak c_0}\le C\|F\|_{C_0[0,1]},\]
 where $C_0[0,1]=\{u\in C[0,1]:u(1)=0\}$ and $\mathfrak c_0$ is the set of the sequences tending to zero.

 $2^\circ$ Consider the spaces $\mathfrak b_{\log^\alpha}\subset \bk_{\log^\alpha}$  and
 $X=\{f:\|V_n*f\|=o((n+1)^\alpha)\},$ which is, by assertion (i) and its proof, a subspace of a space $Y$ isomorphic to  $\bk_{\log^\alpha}.$ It is not hard to show that the polynomials are dense in both $\mathfrak b_{\log^\alpha}$ and $X.$ This proves (ii).
 \par\smallskip {\em Case {\rm (iii)}.} In this case we use the function $\varphi(x)=x\log^\alpha(2/x)$, let $q=1,$ and then proceed as in the proof  of (i). The details are omitted.
 This concludes the proof of the theorem.
  \end{proof}
\begin{rem}
By choosing $\phi(x)=x\log\log(\frac 4x),$ then we can conclude that Theorem~ \ref{thm-decomp} remains true if $\log^\alpha $, resp. $(n+1)^\alpha,$ are replaced with $\log\log$, resp. $\log(n+2).$
\end{rem}
\section{Functions with decreasing coefficients}\label{section-decreasing}

\begin{proof}[Proof of Theorem \ref{thm-decreasing}] Assuming that \eqref{eq-decr} holds, we want  to prove that
\[ \|f\|_{\mathfrak B^1_{\log^\alpha}}\le Ca_0+C\sum_{n=1}^\infty a_{2^{n-1}}(n+1)^{\alpha}.\] According to Theorem \ref{thm-decomp} and its proof, we have
\begin{equation*}C^{-1} \|f\|_{\mathfrak B^1_{\log^\alpha}}\le a_0+\sum_{n=1}^\infty(n+1)^\alpha\|V_n*f\|_1
\le C  \|f\|_{\mathfrak B^1_{\log^\alpha}}.\end{equation*}
Let  $n\ge1,$ $m=2^{n-1},$ and $Q_k=\sum_{j=m}^k \varphi(j/m)e_j.$ Since {$Q_{4m-1}=V_n$}, we have
\begin{align*}V_n*f&=\sum_{k=m}^{4m-1}\varphi(k/m)a_ke_k\\&=\sum_{k=m}^{4m-1}(a_k-a_{k+1})Q_k +a_{4m}Q_{4m-1}\\  & =
 \sum_{k=m}^{4m-1}(a_k-a_{k+1})Q_k +a_{4m}V_n. \end{align*}
 On the other hand,
 $ Q_k=V_n*\Delta_{n,k}$, where
 \[ \Delta_{n,k}=\sum_{j=2^{n-1}}^k z^k,\quad 2^{n-1}\le k\le 2^{n+1}.\]
 By \eqref{II}, with $g=\Delta_{n,k},$ we have
 \[\|Q_k\|_1\le C\|\Delta_{n,k}\|_1\le C\log(k+1-2^{n-1})\le C(n+1).\]
 Combining these inequalities we get
 \begin{align*}\|V_n*f\|_1(n+1)^\alpha&\le C\sum_{k=m}^{4m-1}(a_k-a_{k+1})(n+1)^{\alpha+1} +Ca_{4m}\|V_n\|_1(n+1)^\alpha\\ &
\le  C(n+1)^{\alpha+1}(a_m-a_{4m})+Ca_{4m}(n+1)^\alpha  \\&=
C(n+1)^{\alpha+1}(a_{2^{n-1}}-a_{2^{n+1}})+C(n+1)^\alpha a_{2^{n+1}}. \end{align*}
{\em Here we have used the relation $\|V_n\|_1\le C$} (see \eqref{III})!
Thus
\begin{align*}\label{} (n+1)^\alpha\|V_n*f\|_1&\le C(n+1)^{\alpha+1}(a_{2^{n-1}}-a_{2^n})\\&\quad+C(n+1)^{\alpha+1}(a_{2^{n}}-a_{2^{n+1}})\\&\qquad +C(n+1)^\alpha a_{2^{n+1}}, \end{align*}
and therefore it remains to estimate the sums
\[ S_1=\sum_{n=1}^\infty (n+1)^{\alpha+1}(a_{2^{n-1}}-a_{2^n})\ \ \text{and}\ \ S_2=\sum_{n=1}^\infty (n+1)^{\alpha+1}(a_{2^{n}}-a_{2^{n+1}}).\]

If  $\alpha>-1,$ then
\[(n+1)^{\alpha+1}\le C\sum_{k=1}^n (k+1)^\alpha,\]
and hence
\begin{align*}S_1 & \le C\sum_{n=1}^\infty (a_{2^{n-1}}-a_{2^n})\sum_{k=1}^n(k+1)^\alpha \\&=
C\sum_{k=1}^\infty (k+1)^\alpha \sum_{n=k}^\infty (a_{2^{n-1}}-a_{2^n})\\&
=C\sum_{k=1}^\infty (k+1)^\alpha_{2^{k-1}}. \end{align*}
In the case of $S_2$ we get
\[S_2\le C\sum_{k=1}^\infty (k+1)^\alpha a_{2^k},\]
which completes the proof of ``if" part of the theorem in the case $\alpha>-1. $ If $\alpha=-1,$
then
\[\|V_n*f\|_1(n+1)^{-1}\le C(a_{2^{n-1}}-a_{2^{n+1}}) + C(n+1)^{-1}a_{2^{n+1}}, \]
from which we get the desired result in the case $\alpha=-1.$

To prove ``only if" part we use Hardy's inequality in the form
\[\pi M_1(r,g)\ge \sum_{n=0}^\infty\frac {|\hat g(n)|}{n+1}r^n.\]
It follows that
\begin{align*} \int_\D|f'(z)|&\log^\alpha\frac2{1-|z|}\,dA(z)   \\&=2\int_0^1M_1(r,f')\log^\alpha\frac2{1-r}r\,dr  \\&
\ge \frac 2\pi \sum_{n=1}^\infty a_n\frac n{n+1}\int_0^1\log^\alpha\frac2{1-r}r^n\,dr.\end{align*}
Now the desired result follows from the inequality
\begin{equation*}\label{}\int_0^1\frac{\varphi(1-r)}{1-r}r^n\,dr\ge c\varphi\Big(\frac 1{n+1}\Big)\quad(c={\rm const.>0}), \end{equation*}
valid for any function normal function $\varphi$ (see \cite[Lemma 4.1]{stu}).
   \end{proof}

Before proving Theorem \ref{thm-lib-pos}, some remarks are in order.
Let $f(z)=\sum_{n=0}^\infty a_nz^n,$ $a_n\ge 0.$ In order that $\li f$ be well defined by \eqref{eq-li} {\em it is necessary and sufficient} that
\begin{equation}\label{eq-li-pos}\sum_{n=0}^\infty \frac{a_n}{n+1}\ <\infty. \end{equation}
We already mentioned in Introduction that this condition implies the existence of the integral in \eqref{eq-li}. In fact, this integral converges uniformly on compact subsets of $\D,$ which means that the limit
\[\lim_{x\uparrow 1}\int_0^x f(t+(1-t)z)\,dt\]
exists and is uniform in $|z|<\rho,$ for every $\rho<1.$ This guarantees that $\li f$ is analytic. On the other hand, if
the integral in \eqref{eq-li} exists, then we take $z=0$ to  conclude that \eqref{eq-li-pos} holds.

\begin{proof}[Proof of Theorem \ref{thm-lib-pos}]
The Taylor coefficients of $\li f$ are
\[b_n=\sum_{k=n}^\infty \frac{a_n}{n+1}.\]
The sequence $\{b_n\}$ is nonincreasing so we can apply Theorem \ref{thm-decreasing} to conclude that
$\li f\in \bk^1_{\log^\alpha}$ if and only if
\begin{align*} \sum_{n=0}^\infty\frac{\log^\alpha(n+2)}{n+1}&\sum_{k=n}^\infty\frac{a_k}{k+1} \\&=
\sum_{k=0}^\infty \frac{a_k}{k+1}\sum_{n=0}^k\frac{\log^\alpha(n+2)}{n+1} <\infty. \end{align*}
Now the desired result follows from the estimate
\[C^{-1}\log^{\alpha+1}(k+2)\le  \sum_{n=0}^k\frac{\log^\alpha(n+2)}{n+1}\le C\log^{\alpha+1}(k+2), \]
which holds because $\alpha>-1.$
  \end{proof}
\begin{rem}
The above proof shows that $\li f$ belongs to $\bk^1_{\log^{-1}} $ if and only if
\[\sum_{n=0}^\infty\frac {a_n}{\log\log(n+4)}<\infty.\]
\end{rem}
  Now we pass to the proof of Theorem \ref{prop}.

  \begin{proof}[Proof of Theorem {\rm \ref{thm-decreasing}}{\rm (c)}] Since $\bk_{\log^\alpha}\subset \bk_{\log^\beta}$ for $\beta<\alpha,$ we may assume that $-1<\alpha<0.$
  Let
  \[ f(z)=\sum_{n=0}^\infty a_n{z^n},\quad a_n={\log^{-\eps-\alpha}}(n+2),\]
  where $\eps>1.$
  Condition \eqref{eq-li-pos} holds because $\eps>1.$ For every $r\in (0,1)$ the function $f_r(z)=f(rz)$ belongs to
  $\ho$ and, by Theorem \ref{thm-decreasing} and its proof, the set $\{f_r:0<r<1\}$ is bounded in $\bk^1_{\log^\alpha}.$ On the other hand,
  \begin{align*}\lo (f_r)(0)=\sum_{k=0}^\infty \frac{r^k}{(k+1)\log^{\alpha+\eps}(k+2)}. \end{align*}
  Now choose $\eps=1-\alpha>1$ (because $\alpha<0$) to get
  \begin{align*}\lo(f_r)(0) &=\sum_{k=0}^\infty \frac{r^k}{(k+1)\log(k+2)}  \\&
  \longrightarrow\infty \ \ (r\uparrow 1). \end{align*}
  This contradicts the fact that if a set $X\subset \bk^1_{\log^\alpha}$ is bounded and $\lo$ is bounded on $\bk^1_{\log^\alpha}$, then the set $\{\lo f(0):f\in X\}$ is bounded because the functional  $h\mapsto h(0)$  is continuous  on $H(\D).$         This completes the proof.
    \end{proof}
    \begin{proof}[Proof of Theorem \rm\ref{prop}\rm{(a)}]  Let $g\in \bk_{\log^\alpha}.$ Then
    \[M_2(r,g')\le C(1-r)^{-1}\log\frac2{1-r}.\]
    It follows that
    \[2^{n}\bigg(\sum_{k=2^{n}}^{2^{n+1}-1}|\hat g(k)|^2\bigg)^{1/2}r^{2^{n+1}}\le C(1-r)^{-1}\log^\alpha\frac2{1-r}.\]
    Taking $r=1-2^{-n},$ $n\ge1, $  we get
    \[\bigg(\sum_{k=2^{n}}^{2^{n+1}-1}|\hat g(k)|^2\bigg)^{1/2}\le C\log^\alpha(n+1).\]
    Hence,
    \begin{align*} 2^{-n}\sum_{k=2^n}^{2^{n+1}-1}|\hat g(k)|&\le \bigg(2^{-n}\sum_{k=2^{n}}^{2^{n+1}-1}|\hat g(k)|^2\bigg)^{1/2}\\&\le 2^{-n/2}\log^\alpha(n+1).
    \end{align*}
    This                    gives the result. \end{proof}
    \begin{proof}[Proof of Theorem \rm\ref{prop}\rm{(b)}]  In this case we use Hardy's inequality as in the proof of Theorem
    \ref{thm-decreasing} to get
    \[\|g\|_{\bk^1_{\log^\alpha}}\ge c\sum_{n=0}^\infty \frac{|\hat g(n)|\log^\alpha(n+2)}{n+1}.\]
    This proves the result because $\alpha\ge 0.$
     \end{proof}

     \section{Proofs of Theorem \ref{th-2} and \ref{th-3}}\label{sect23}
     Define the operator $\mathcal R:H(\D)\mapsto H(\D)$ by
     \[\mathcal Rf(z)=\sum_{n=0}^\infty (n+1)\hat f(n)z^n=\frac d{dz}(zf(z)).\]
     By using Theorem \ref{thm-decomp} and the relation
     \begin{equation}\label{rrr} C^{-1}2^n\|V_n*f\|_p\le \|V_n*\mathcal Rf\|_p\le C2^n\|V_n*f\|_p\quad (n\ge 0)\end{equation}, one proves that
     the norm in $\bk_{\log^\alpha}$ is equivalent to
      $$\sup_{z\in \D}(1-|z|)\log^{-\alpha}\frac{2}{1-|z|}|\mathcal R f(z)|.$$
\begin{proof}[Proof of Theorem \ref{th-2}{\rm (a)}]   Let $\alpha>-1$ and $f\in \bk_{\log^\alpha}.$ Then, by integration,
\begin{equation*}\label{} |f(z)|\le \log^{\alpha+1}\frac 1{1-|z|}. \end{equation*}
Since
\begin{equation*}\label{} \mathcal R\ce f(z)=\frac{f(z)}{1-z}, \end{equation*}
we see that
\[|\mathcal R\ce f(z)|\le C(1-|z|)^{-1}\log^{\alpha+1}\frac1{1-|z|}. \]
The result follows.
 \end{proof}
 \begin{proof}[Proof of Theorem \ref{th-2}{\rm (b)}] The function $\varphi(x)=x\log\log(4/x)$ is normal  because $\lim_{x\to 0}x\varphi'(x)/\varphi(x)=1.$ Hence, arguing as in the proof of Theorem \ref{thm-decomp} we conclude that
 $f\in \bk_{\mathop{\rm logg}}$ if and only if
 \[ \sup_{n\ge0} \|V_n*f\|_\infty/\log(n+2)<\infty.\]
 Then using \eqref{rrr} we find that $g\in \bk_{\mathop{\rm logg}}$ if and only if
 \[ |\mathcal Rg(z)|\le C(1-|z|)^{-1}\log\log\frac 4{1-|z|}.\]
 The rest of the proof is the same as in the case of (a).
     \end{proof}
     \begin{rem}
     In the case of the little spaces the proofs are similar and is therefore omitted.
     \end{rem}

     For the proof of Theorem \ref{th-3} we need the following lemma \cite{novo}:
     \begin{lem}
     If $f\in \ell^1_{-1},$ then $\li f$ is well defined by \eqref{eq-li} and the inequality
     \begin{equation}\label{ineq-li} rM_1(r,(\li f)')\le 2(1-r)^{-1}\int_r^1M_1(s,f')\,ds, \quad 0<r<1,\end{equation}
     holds.
     \end{lem}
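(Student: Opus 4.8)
The plan is to start from the integral formula \eqref{eq-li}, differentiate under the integral sign, pass to the integral means $M_1$ by Minkowski's integral inequality, and then reduce everything to the estimate of a single circular average of $|f'|$, which will be controlled by subharmonicity.

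First I would check that $\li f$ is well defined when $f\in\ell^1_{-1}$: substituting $u=t+(1-t)z$ gives $\int_0^1(t+(1-t)z)^n\,dt=(1+z+\cdots+z^n)/(n+1)$, so on each disk $|z|\le\rho<1$ the series arising from \eqref{eq-li} is dominated by $(1-\rho)^{-1}\sum_n|\hat f(n)|/(n+1)<\infty$. Hence the integral in \eqref{eq-li} converges uniformly on compacta, $\li f\in H(\D)$, and the same kind of bound applied to the $z$-derivative $(1-t)f'(t+(1-t)z)$ of the integrand (whose $t$-integral over $(0,1)$ is again finite, the factor $(1-t)$ producing an extra $1/(m+2)$) legitimizes
\[(\li f)'(z)=\int_0^1(1-t)\,f'\bigl(t+(1-t)z\bigr)\,dt.\]
Fixing $r\in(0,1)$, taking absolute values, integrating over $|z|=r$ and using Tonelli's theorem, I obtain
\[M_1(r,(\li f)')\le\int_0^1(1-t)\,I(t)\,dt,\qquad I(t):=\frac1{2\pi}\int_0^{2\pi}\bigl|f'\bigl(t+(1-t)re^{i\theta}\bigr)\bigr|\,d\theta.\]

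The core of the argument is the estimate of $I(t)$, which is the mean of the subharmonic function $|f'|$ over the circle $\{\,|w-t|=(1-t)r\,\}$ — a circle that is \emph{not} centred at the origin, so one may not simply bound it by $M_1(R_t,f')$ with $R_t:=t+(1-t)r$ (such a bound fails in general). Instead I would carry the full Poisson kernel through the computation. Since $f'$ is analytic, $|f'|$ is majorized on $\{|w|\le R_t\}$ by the Poisson integral of its values on $\{|w|=R_t\}$; integrating this inequality over $w$ running around $\{\,|w-t|=(1-t)r\,\}$, interchanging the order of integration, and using that for each $\psi$ the function $w\mapsto(R_t^2-|w|^2)/|R_te^{i\psi}-w|^2$ is harmonic (so its mean over that circle equals its value $(R_t^2-t^2)/|R_te^{i\psi}-t|^2$ at the centre $t$), one arrives at
\[I(t)\le\frac1{2\pi}\int_0^{2\pi}\frac{R_t^2-t^2}{|R_te^{i\psi}-t|^2}\,|f'(R_te^{i\psi})|\,d\psi\le\frac{R_t^2-t^2}{(R_t-t)^2}\,M_1(R_t,f')=\frac{R_t+t}{(1-t)r}\,M_1(R_t,f'),\]
using $|R_te^{i\psi}-t|\ge R_t-t=(1-t)r$. (The circle is internally tangent to $\{|w|=R_t\}$ at the single point $w=R_t$, but the Poisson kernel is harmonic off the one point $R_te^{i\psi}$, so the mean-value step is valid for a.e.\ $\psi$, which is all that is needed.)

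It then remains to assemble the pieces. Since $R_t+t=r+t(2-r)\le2$, the previous line gives $(1-t)I(t)\le 2r^{-1}M_1(R_t,f')$, whence
\[rM_1(r,(\li f)')\le 2\int_0^1 M_1(R_t,f')\,dt=\frac{2}{1-r}\int_r^1 M_1(s,f')\,ds\]
after the substitution $s=R_t=r+t(1-r)$; this is exactly \eqref{ineq-li}. The step I expect to be the real obstacle is the estimate of $I(t)$: one has to resist replacing the off-centre circular mean by $M_1(R_t,f')$ and instead keep the Poisson kernel, so that the factor $(1-t)$ produced by $(\li f)'$ precisely absorbs the $(R_t-t)^{-1}=((1-t)r)^{-1}$ singularity and leaves the sharp constant $2$.
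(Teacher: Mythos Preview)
Your argument is correct and complete. The well-definedness and the differentiation under the integral sign are properly justified (the $(1-t)$ factor does yield, via the Beta integral, the extra $1/(m+2)$ needed to land back in $\ell^1_{-1}$), and the Poisson-kernel treatment of the off-centre mean $I(t)$ is the right idea: averaging the harmonic Poisson kernel over the inner circle and evaluating at the centre $t$ gives exactly $(R_t^2-t^2)/|R_te^{i\psi}-t|^2$, and your handling of the tangency at $\psi=0$ is adequate since it concerns a null set. The algebra $R_t+t\le 2$ and the substitution $s=R_t$ then produce \eqref{ineq-li} with the stated constant $2$.

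As for comparison: the paper does \emph{not} prove this lemma. It is quoted from the external reference \cite{novo} and used as a black box in the proof of Theorem~\ref{th-3}(a). So your write-up actually supplies a proof that the paper omits. Your approach---differentiating \eqref{eq-li} directly and controlling the resulting off-centre circular mean by the Poisson majorant of $|f'|$---is natural and efficient; in particular your remark that one must \emph{not} simply replace the off-centre mean by $M_1(R_t,f')$ is exactly the point, and the Poisson-kernel argument is what converts the dangerous factor $(R_t-t)^{-1}$ into the harmless $(1-t)^{-1}r^{-1}$ that cancels against the $(1-t)$ from $(\li f)'$.
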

     Before passing to the proof observe that $\bk^1_{\log^\alpha}\subset \bk^1$ and $\bk^1_{\mathop {\rm logg}}\subset \bk^1,$ and, since $\bk_1\subset H^1,$ we see that in all cases of Theorem \ref{th-3} the operator $\li$ is well defined.
     \begin{proof}[Proof of Theorem \ref{th-3}\rm (a)]
     We have, by \eqref{ineq-li},
     \begin{align*} \int_\D|&(\li f)'(z)|\log^{\alpha-1}\frac2{1-|z|}\, dA(z)\\ & =2\int_0^1M_1(r,(\li f)')\log^{\alpha-1}\frac{2}{1-r}r\,dr \\&
     \le 4\int_0^1(1-r)^{-1}\log^{\alpha-1}\frac{2}{1-r}\,dr\int_r^1M_1(s,f')\,ds\\&
     =4\int_0^1M_1(s,f')\,ds\int_0^s(1-r)^{-1}\log^{\alpha-1}\frac{2}{1-r}\,dr \\&
     \le C\int_0^1M_1(s,f')\log^{\alpha}\frac{2}{1-s}\,ds.\end{align*}
     A standard application of the maximum modulus principle shows that the inequality remains valid if we replace $ds$
     with $s\,ds.$ This gives the result.
          \end{proof}

          The proofs of Theorem \ref{th-3}, (b) and (c), are similar and we omit them.

\section{Proof of Theorem \ref{thm-dual}}\label{sec-dual}
We consider a more general situation. Let $X\subset H(\D)$ (with continuous inclusion) be a Banach space
such that the functions $f_w(z)=f(wz),$ $|w|\le 1,$ belong to $X$ whenever $f\in X,$ and $\sup_{|w|\le1}\|f_w\|_X\le \|f\|_X.$ Such a space is said to be {\em homogeneous} (see \cite{blasco-revista}). A homogeneous space satisfies the condition
\begin{equation}\label{71}\|V_n*f\|_X\le C\|f\|_X, \quad f\in X, \end{equation}
where $C$ is independent of $n$ and $f.$
 \par\smallskip
 If in addition
 \begin{equation}\label{abel} \lim_{r\uparrow 1}\|f-f_r\|_X=0,\quad f\in X,\end{equation} then the dual  of X can be identified with the space, $X',$ of those $g\in H(\D)$
for which the limit \eqref{pairing} exists for all $f\in X$ (see \cite{aleman, blasco-revista}).
Also, the dual of a homogeneous space $X$ satisfying \eqref{abel}  can be realized as the space of coefficient multipliers, $(X,A(\D)),$
from $X$ to $A(\D); $ in this case we have $(X,A(\D))=(X,H^\infty)=:X^\ast$ (see \cite{blasco-revista}). The norm in $X^\ast$ is introduced as
\begin{equation*}\label{} \|g\|_{X^\ast}=\sup{\|f*g\|_\infty}: f\in X,\ \|f\|_X\le 1\},\end{equation*}
and, if $X$ is homogeneous and satisfies \eqref{abel}, it is equal to
\begin{equation*}\label{}\|g\|_{X'}=\sup\{|\la f,g\ra| : f\in X,\ \|f\|_X\le 1\}. \end{equation*}
There is another way to express $\la f,r\ra,$ when $f\in X,$ $X$ satisfies \eqref{abel}, and $g\in X'$; namely, in this case,the function $f*g$ belongs to $A(\D),$ and we have $\la f, g\ra=(f*g)(1)$ (see \cite{I, blasco-revista}).

We fix a sequence $\lambda=\{\lambda_n\}_{0}^\infty$ of positive real numbers such that
\begin{equation}\label{lambda} 0<\inf_{n\ge 0}\frac{\lambda_{n+1}}{\lambda_n},\quad\sup_{n\ge 0}\frac{\lambda_{n+1}}{\lambda_n}\ <\infty.\end{equation}

It is clear that the spaces $H^p$ $(0<p\le\infty)$, $A(\D),$ $\bk_{\log^\alpha},$ $\mathfrak b_{\log^\alpha}$, and $\bk^1_{\log^\alpha}$ are homogeneous. Among them only $H^\infty$ and $\bk_{\log^\alpha}$ do not satisfy condition \eqref{abel}.
\par\smallskip
Consider the following three spaces of sequences $\{f_n\}_0^\infty,$ $f_n\in H(\D)$:\par\smallskip
(a) $ \mathfrak c_0(\lambda, X)=\{\{f_n\}: \lim_{n\to\infty}\|f_n\|_X/\lambda_n=0 \};$
\par\medskip
(b) $\ell^\infty(\lambda,X)=\{\{f_n\}: \sup_{n\ge0}\lambda_n\|V_n*f\|_X<\infty\};$
 \par\medskip
 (c) $\ell^1(\lambda,X)=\{\{f_n\}: \sum_{n=0}^\infty \|f_n\|_X/\lambda_n<\infty \}.$
\par\medskip
 We also define the spaces $v_0(\lambda,X),$ $V^\infty(\lambda,X),$ and $V^1(\lambda,X)$ (as subsets of $H(\D)$) by replacing $f_n$ with $V_n*f$ in (a), (b), and (c), respectively.
The proof of the following lemma is rather easy, and is therefore left to the reader.
\begin{lem} If $X$ is a homogeneous space, then so are $ v(\lambda, X),$ $V^\infty(\lambda,X),$ and $V^1(\lambda,X).$
The spaces $v_0(\lambda,X)$ and $V^1(\lambda,X)$ satisfy \eqref{abel}.  The space  $v_0(\lambda,X)$ is equal to the closure in $V^\infty(\lambda,X)$ of the sets of all polynomials.
\end{lem}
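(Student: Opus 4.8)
The plan is to verify each assertion directly from the definitions, exploiting the two structural properties \eqref{71} and \eqref{abel} already isolated for homogeneous spaces. First I would check homogeneity of the three $V$-spaces. Fix $f$ in one of them and $|w|\le 1$; the key observation is that the rotation operator commutes with Hadamard convolution, $V_n*(f_w)=(V_n*f)_w$, since $V_n*f$ has coefficients $\widehat{V_n}(k)\widehat f(k)$ and dilating by $w$ multiplies the $k$-th coefficient by $w^k$. Hence $\|V_n*(f_w)\|_X=\|(V_n*f)_w\|_X\le\|V_n*f\|_X$ because $X$ is homogeneous, and taking sup (in the $V^\infty$ case, after multiplying by $\lambda_n$), sum (in the $V^1$ case, after dividing by $\lambda_n$), or the limit superior (in the $v_0$ case) over $n$ gives $\|f_w\|\le\|f\|$ in each of $v_0(\lambda,X)$, $V^\infty(\lambda,X)$, $V^1(\lambda,X)$. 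That the dilations again lie in the space is immediate from the same identity together with $f_w\in X$.

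Next I would establish \eqref{abel} for $v_0(\lambda,X)$ and $V^1(\lambda,X)$. Here the point is that $V_n*(f-f_r)=(V_n*f)-(V_n*f)_r$, and since $X$ itself satisfies \eqref{abel} we have $\|(V_n*f)-(V_n*f)_r\|_X\to 0$ as $r\uparrow 1$ for each fixed $n$; moreover this quantity is bounded by $2\|V_n*f\|_X$ uniformly in $r$. For $V^1(\lambda,X)$ the series $\sum_n\|V_n*(f-f_r)\|_X/\lambda_n$ is dominated termwise by the convergent series $2\sum_n\|V_n*f\|_X/\lambda_n$, so the dominated convergence theorem for series yields $\|f-f_r\|_{V^1(\lambda,X)}\to 0$. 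For $v_0(\lambda,X)$ one argues with sup instead of sum: given $\eps>0$, choose $N$ with $\lambda_n\|V_n*f\|_X<\eps$ for $n>N$ (using the $v_0$ condition and \eqref{lambda}, which keeps $\lambda_n\|(V_n*f)_r\|_X$ controlled), then make the finitely many terms $n\le N$ small by choosing $r$ close to $1$. The constants \eqref{lambda} are exactly what is needed to pass between $\lambda_n$ and $\lambda_{n+1}$ and to keep these estimates uniform.

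For the final claim — that $v_0(\lambda,X)$ is the closure of the polynomials in $V^\infty(\lambda,X)$ — I would proceed in two inclusions. Polynomials lie in $v_0(\lambda,X)$ since $V_n*p=0$ for all large $n$, and $v_0(\lambda,X)$ is norm-closed in $V^\infty(\lambda,X)$ (a uniform limit of sequences with $\lambda_n\|V_n*f\|_X\to0$ again has this property, by an $\eps/2$ argument), so the closure of the polynomials is contained in $v_0(\lambda,X)$. Conversely, given $f\in v_0(\lambda,X)$, the partial sums $S_N f=\sum_{n=0}^N V_n*f$ are polynomials (each $V_n*f$ is a polynomial), and $\|f-S_Nf\|_{V^\infty(\lambda,X)}=\sup_{m>N}\lambda_m\|V_m*(f-S_Nf)\|_X$; since $V_m*(V_n*f)$ vanishes unless $|m-n|\le1$, for $m>N+1$ this reduces to $\lambda_m\|V_m*f\|_X$, which tends to $0$. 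Hence $S_Nf\to f$ and $f$ lies in the closure of the polynomials.

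The main obstacle I anticipate is bookkeeping rather than conceptual: one must be careful that $V_m*(V_n*f)$ is supported on $|m-n|\le 1$ (from the frequency supports of the $V_n$'s), so that the norm in a $V$-space of a tail of the $V_n$-decomposition is genuinely controlled by the tail of the defining sum or sup, and one must invoke \eqref{lambda} consistently to absorb the resulting shifts in the index of $\lambda$. Once that is in place, every step is a routine verification, which is presumably why the author leaves it to the reader.
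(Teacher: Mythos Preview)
The paper gives no proof of this lemma; it is expressly left to the reader. Your plan is precisely the routine verification the author has in mind: commute dilation with Hadamard convolution to get homogeneity, use dominated convergence (for $V^1$) or a tail--head split (for $v_0$) to obtain \eqref{abel}, and approximate by the partial sums $S_Nf=\sum_{n\le N}V_n*f$ to identify $v_0(\lambda,X)$ with the closure of the polynomials.

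Two small points deserve care when you write it out. First, in the \eqref{abel} step you appeal to \eqref{abel} for $X$; the lemma as stated assumes only homogeneity, but since each $V_n*f$ is a polynomial you can instead argue directly that $\|p-p_r\|_X\to 0$ for polynomials $p\in X$ (or simply note that every application in the paper is to an $X$ satisfying \eqref{abel}). Second, your formula $\|f-S_Nf\|_{V^\infty(\lambda,X)}=\sup_{m>N}(\cdots)$ is not literally correct: the supremum runs over all $m$, and for $m\in\{N,N+1\}$ the terms $V_m*(f-S_Nf)$ pick up boundary contributions from $V_m*V_N*f$ and $V_m*V_{N+1}*f$. As you yourself anticipate, these are absorbed using \eqref{71} together with \eqref{lambda}, but the step should be written out rather than suppressed. (You should also fix a single convention for where $\lambda_n$ sits---the paper's definitions (a)--(c) place it inconsistently---and carry it through; the argument is of course indifferent to the choice.)
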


Theorem \ref{thm-dual} will be deduced from Theorem \ref{thm-decomp} and the following.

\begin{prop}\label{th-dual}
 If $X$ is a homogeneous space satisfying \eqref{abel}, then the dual of $v_0(\lambda,X)$, resp. $V^1(\lambda,X),$ is isomorphic to $V^1(\lambda,X'),$ resp. $V^\infty(\lambda,X'),$ with respect to \eqref{pairing}.
\end{prop}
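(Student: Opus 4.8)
The plan is to reduce the duality statement for the ``vector-valued sequence'' spaces to the classical duality between $c_0(\lambda)$, $\ell^1(\lambda)$, and $\ell^\infty(\lambda)$, using the hypothesis on $X$ to handle the $H(\D)$-valued entries one block at a time. Concretely, I would first set up the natural pairing: given $g\in V^1(\lambda,X')$ (resp. $g\in V^\infty(\lambda,X')$) and $f\in v_0(\lambda,X)$ (resp. $f\in V^1(\lambda,X)$), write $f=\sum_n V_n*f$ and $g=\sum_n V_n*g$ as in \eqref{I}, and observe that because the $V_n$ have disjoint-by-pairs ``doubled'' spectra, $\langle V_m*f, V_n*g\rangle=0$ unless $|m-n|\le 1$. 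Hence, if the limit in \eqref{pairing} exists, it equals $\sum_{|m-n|\le 1}\langle V_m*f,V_n*g\rangle$, and each term is controlled by $|\langle V_m*f,V_n*g\rangle|\le \|(V_m*f)*(\widetilde{V_n*g})\|_\infty$-type estimates; using the fact proved earlier that for a homogeneous $X$ satisfying \eqref{abel} one has $\langle h,k\rangle=(h*k)(1)$ with $h*k\in A(\D)$ and $|\langle h,k\rangle|\le \|h\|_X\|k\|_{X'}$, we get $|\langle V_m*f,V_n*g\rangle|\le C\|V_m*f\|_X\|V_n*g\|_{X'}$.

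Granting that, the upper bounds for the functionals follow from Hölder's inequality on the scalar sequences $\{\|V_n*f\|_X\}$ and $\{\|V_n*g\|_{X'}\}$: in the $v_0$ case the first lies in $c_0(\lambda)$ and the second in $\ell^1(\lambda^{-1})$, giving $|\langle f,g\rangle|\le C\|f\|_{v_0(\lambda,X)}\|g\|_{V^1(\lambda,X')}$; in the $V^1$ case the first lies in $\ell^1(\lambda^{-1})$ and the second in $\ell^\infty(\lambda)$. Also one must check the limit in \eqref{pairing} actually exists: for $f\in v_0(\lambda,X)$ or $V^1(\lambda,X)$ the Abel-type condition \eqref{abel} (valid for these spaces by the preceding Lemma) together with the disjointness of spectra makes $\sum_n\langle V_n*f,V_n*g\rangle+\sum_n\langle V_n*f, V_{n\pm1}*g\rangle$ absolutely convergent, and standard Abel summation identifies it with $\lim_{r\uparrow1}\sum \hat f(n)\hat g(n)r^{2n}$.

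The converse — that every $\Phi$ in the dual arises this way from some $g$ in the claimed space, with $\|g\|\asymp\|\Phi\|$ — is where the real work lies, and is the step I expect to be the main obstacle. The idea is: $\Phi$ restricted to the $n$-th block, i.e. $h\mapsto \Phi(V_n* h)$ for $h\in X$ (embedded via $h\mapsto$ the sequence with $h$ in slot $n$), is a bounded functional on $X$, hence by the identification $X^*\cong X'$ is given by pairing with some $g_n\in X'$, and $\|g_n\|_{X'}\le C\|\Phi\|$. One then forms $g=\sum_n V_n*g_n$ (or rather one shows $V_n*g$ can be taken to be $V_n*g_n$ after a harmless adjustment, using that $V_{n-1}*V_n*V_{n+1}$-type overlaps are finite-dimensional and can be absorbed) and must verify: (i) the resulting scalar sequence $\{\|V_n*g\|_{X'}\}$ lies in $\ell^1(\lambda^{-1})$ (resp. $\ell^\infty(\lambda)$), which is exactly the statement that the block-functionals $\Phi_n$, taken together, cannot grow faster than the norm of a single functional permits — this is the classical $c_0(\lambda)^*=\ell^1(\lambda^{-1})$ and $\ell^1(\lambda)^*=\ell^\infty(\lambda^{-1})$ duality, applied with $X$-valued coefficients; and (ii) $\Phi(f)=\langle f,g\rangle$ on a dense set (the polynomials), which then extends by continuity. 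Handling the ``overlap'' between consecutive $V_n$'s carefully — so that the $g_n$ actually patch into a single $g\in H(\D)$ with the right block norms — is the delicate bookkeeping, but it is of the same nature as the decomposition arguments already used in Section~\ref{sec-decomp}. Finally, Theorem~\ref{thm-dual} follows by choosing $\lambda_n=(n+1)^{\alpha}$ (resp. $\lambda_n=(n+1)^{-\alpha}$, resp. $\lambda_n=\log(n+2)$), $X=H^\infty$ or $X=A(\D)$, noting $X'=(A(\D))'=H^1$ and $(H^1)'=H^\infty$ via the same pairing, and invoking Theorem~\ref{thm-decomp} to identify $V^\infty(\lambda,H^\infty)$ with $\bk_{\log^\alpha}$, $v_0(\lambda,A(\D))$ with $\mathfrak b_{\log^\alpha}$, and $V^1(\lambda,H^1)$ with $\bk^1_{\log^\alpha}$, together with the remark after Theorem~\ref{thm-decomp} for the $\mathop{\rm logg}$ case.
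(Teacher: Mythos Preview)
Your plan for the forward inclusions is essentially the paper's: write $f*g=\sum_n(P_n*f)*(V_n*g)$ with $P_n=V_{n-1}+V_n+V_{n+1}$, bound each summand by $\|P_n*f\|_X\|V_n*g\|_{X^\ast}$, and pair the scalar sequences via $c_0\times\ell^1$ (resp.\ $\ell^1\times\ell^\infty$).

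For the converse you propose to restrict $\Phi$ to single blocks, obtain representatives $g_n\in X'$, and then patch $g=\sum_n V_n*g_n$. The paper avoids the patching altogether: since $v_0(\lambda,X)$ and $V^1(\lambda,X)$ are themselves homogeneous and satisfy \eqref{abel} (this is the Lemma immediately preceding the Proposition), any bounded functional is \emph{already} represented by some $g\in H(\D)$. The remaining task is then purely a norm estimate on the blocks $V_n*g$, and the paper does this through Lemmas~\ref{lqs} and~\ref{S}: the operator $S(\{f_n\})=T(\{f_n\})*g$ maps $\mathfrak c_0(\lambda,X)$ into $H^\infty$ with $\|S\|\le C\|g\|_{(v_0(\lambda,X))'}$, and choosing unit vectors $f_n\in X$ with $\langle f_n,V_n*g\rangle\ge\tfrac12\|V_n*g\|_{X^\ast}$, then testing $S$ on finitely supported $\{a_nf_n\}$, yields the $\ell^1$ bound. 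This is the ``classical $c_0^*=\ell^1$'' step you correctly invoke, but note that your single-block estimate $\|g_n\|_{X'}\le C\|\Phi\|/\lambda_n$ only gives $\ell^\infty$-type control; the summability genuinely requires testing on sequences, not blocks. Also, the overlaps between consecutive $V_n$ are not ``finite-dimensional'' in any uniform sense (their degree is $\sim2^n$), so your patching would need real work---work the paper's route sidesteps.

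Your final paragraph contains an actual error that would derail the deduction of Theorem~\ref{thm-dual}. It is not true that $(A(\D))'=H^1$ or that $(H^1)'=H^\infty$ under \eqref{pairing}; one has only $H^1\subset (A(\D))'$ and $H^\infty\subset(H^1)'\subset\bk$. What the paper actually proves (in the proof of Theorem~\ref{thm-dual}, not of the Proposition) is the weaker block-level equivalence $\|V_n*g\|_{A(\D)'}\asymp\|V_n*g\|_1$, via the Riesz representation theorem and an integral bound on $M_1(r,\mathcal R g)$ in terms of the representing measure. This step is not a triviality and cannot be replaced by the false identification you propose.
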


In proving we use ideas from \cite{I, II, coef}. For the proof we need the following lemma.

\begin{lem}\label{lqs}
The operator $T(\{f_n\})=\sum_{n=0}^\infty V_n*f_n$ acts as a bounded operator from $Y$ to $Z,$ where $Y$ is one of the spaces $\mathfrak c_0(\lambda,X),$ $\ell^\infty (\lambda,X)$, and $\ell^1(\lambda,X),$ while $Z$ is $v_0(\lambda,X),$ $V^\infty(\lambda,X)$, and $V^1(\lambda,X),$
respectively.
\end{lem}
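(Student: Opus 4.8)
The plan is to handle the three cases uniformly by reducing the boundedness of $T$ to two facts: first, that for each fixed $n$ the map $f_n\mapsto V_n*f_n$ is bounded on $X$ with a constant independent of $n$ (this is exactly \eqref{71}, valid because $X$ is homogeneous); and second, that the partial sums $\sum_{n} V_n*f_n$ assemble into a genuine element of $H(\D)$ whose $V_k$-blocks are controlled. The key structural observation is the near-orthogonality of the Hadamard blocks: since $V_k$ is supported on frequencies in $[2^{k-1},2^{k+1})$ and $V_n*f_n$ on frequencies in $[2^{n-1},2^{n+1})$, the product $V_k*(V_n*f_n)=V_k*V_n*f_n$ vanishes unless $|n-k|\le 1$. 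Hence
\[
V_k*T(\{f_n\})=\sum_{|n-k|\le 1} V_k*V_n*f_n,
\]
a sum of at most three terms. Applying \eqref{71} twice (first for $V_n$, then for $V_k$) gives $\|V_k*T(\{f_n\})\|_X\le C\sum_{|n-k|\le 1}\|f_n\|_X$ with $C$ independent of $k$.

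From this uniform block estimate the three boundedness statements follow by routine manipulation of the defining norms. For $Y=\ell^1(\lambda,X)$, $Z=V^1(\lambda,X)$: using \eqref{lambda} to compare $\lambda_k$ with $\lambda_n$ for $|n-k|\le 1$ (the ratios are bounded above and below), one gets
\[
\sum_{k=0}^\infty \frac{\|V_k*T(\{f_n\})\|_X}{\lambda_k}
\le C\sum_{k=0}^\infty\frac1{\lambda_k}\sum_{|n-k|\le1}\|f_n\|_X
\le C'\sum_{n=0}^\infty\frac{\|f_n\|_X}{\lambda_n},
\]
so $\|T(\{f_n\})\|_{V^1(\lambda,X)}\le C'\|\{f_n\}\|_{\ell^1(\lambda,X)}$. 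For $Y=\ell^\infty(\lambda,X)$, $Z=V^\infty(\lambda,X)$ one instead takes suprema: $\lambda_k\|V_k*T(\{f_n\})\|_X\le C\sum_{|n-k|\le1}\lambda_k\|f_n\|_X\le C''\sup_n\lambda_n\|f_n\|_X$, again by \eqref{lambda}. For $Y=\mathfrak c_0(\lambda,X)$, $Z=v_0(\lambda,X)$ the same estimate shows $\lambda_k\|V_k*T(\{f_n\})\|_X\to 0$ as $k\to\infty$ because each of the finitely many terms does; one also notes that $T(\{f_n\})\in v_0(\lambda,X)$ rather than merely $V^\infty(\lambda,X)$ precisely because of this. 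In all three cases one should first check on finitely supported sequences $\{f_n\}$, where $T(\{f_n\})$ is manifestly a polynomial, and then pass to the limit using the established norm bound and the completeness of $Z$.

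The main obstacle is a point of rigor rather than of estimation: one must verify that the series $\sum_{n=0}^\infty V_n*f_n$ converges in $H(\D)$ (uniformly on compact subsets) so that $T(\{f_n\})$ is well defined, and that its Hadamard block $V_k*T(\{f_n\})$ really equals the finite sum written above. Convergence in $H(\D)$ follows because the terms are supported on disjoint frequency ranges, so on a fixed disk $|z|\le\rho<1$ the tail is dominated by $\sum_{n\ge N}\rho^{2^{n-1}}\sup_{|z|\le 1}|V_n*f_n(z)|$, and $\sup_{|z|\le1}|V_n*f_n(z)|\le \|V_n*f_n\|_{H^\infty}$, which is at worst polynomially bounded in $2^n$ in every case under consideration (using \eqref{71} with the $H^\infty$-embedding of $X$, or directly the growth allowed by the $Y$-norm), and this is crushed by $\rho^{2^{n-1}}$. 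Once convergence is secured, the identification of the $k$-th block is immediate from the frequency-support disjointness, and the rest is bookkeeping. I would state the convergence claim as a short preliminary paragraph and then give the three cases in one stroke as above.
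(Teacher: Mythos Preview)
Your proposal is correct and follows essentially the same line as the paper's proof: the paper also uses the near-orthogonality $V_n*V_j=0$ for $|j-n|\ge 2$ to reduce $V_n*T(\{f_j\})$ to a sum of at most three terms, applies \eqref{71} to get $\|V_n*T(\{f_j\})\|_X\le C\sum_{j=n-1}^{n+1}\|f_j\|_X$, and then finishes with \eqref{lambda}. You supply more detail than the paper (the convergence of $\sum V_n*f_n$ in $H(\D)$, and the explicit treatment of each of the three cases), which the paper simply omits with ``the proof is easily completed by using \eqref{lambda}.''
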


\begin{proof} We have
\begin{equation*}\label{}\begin{aligned} V_n*V_j=0\quad \text{for $|j-n|\ge2$} \end{aligned} \end{equation*}
and hence
\begin{equation*}\label{}\begin{aligned}
V_n*T(\{f_j\})=\sum_{j=n-1}^{n+1}V_n*V_j*f_j, \quad n\ge 0,
 \end{aligned} \end{equation*}
where, by definition, $w_j=f_j=0$ for $j<0.$
It follows that
\begin{align*}
\|V_n*T(\{f_j\}\|_X \le C\sum_{j=n-1}^{n+1}\|f_j\|_X,
\end{align*}
where we  have used \eqref{71}. Now the proof is easily completed by using \eqref{lambda}.
\end{proof}

\begin{lem}\label{S}
Let $g\in (v_0(\lambda,X))',$ resp. $g\in (V^1(\lambda,X))',$ and  define the operator $S$ on $\mathfrak c_0(\lambda,X)$, resp. $\ell^1(\lambda,X)$, by
\begin{equation*}\label{} S(\{f_n\})=T(\{f_n\})*g=\sum_{k=0}^\infty f_k*V_k*g.\end{equation*}
Then $S$ maps $\mathfrak c_0(\lambda,X)$, resp. $\ell^1(\lambda,X)$, into $H^\infty$ and
$\|S\|\le C\|g\|_{(v_0(\lambda,X))'},$ resp. $\|S\|\le C\|g\|_{(V^1(\lambda,X))'}.$
\end{lem}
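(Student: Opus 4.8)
The plan is to reduce the statement to Lemma~\ref{lqs} and the general facts about homogeneous spaces recalled above, the point being that $S$ is, up to the identification $S(\{f_n\})=T(\{f_n\})*g$, the composition of two bounded maps. First I would note, by the lemma on homogeneity preceding Proposition~\ref{th-dual}, that $v_0(\lambda,X)$ and $V^1(\lambda,X)$ are homogeneous and satisfy \eqref{abel}; write $Z$ for whichever of the two is relevant. Then $Z'=Z^\ast$ with equal norms, and for every $h\in Z$ and $g\in Z'$ the Hadamard product $h*g$ lies in $A(\D)$ and satisfies $\|h*g\|_\infty\le\|g\|_{Z'}\|h\|_Z$; in particular $h\mapsto h*g$ is a bounded operator from $Z$ into $A(\D)$ with norm at most $\|g\|_{Z'}$. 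On the other side, Lemma~\ref{lqs} gives that $T$ is bounded from $\mathfrak c_0(\lambda,X)$ into $v_0(\lambda,X)$ and from $\ell^1(\lambda,X)$ into $V^1(\lambda,X)$.

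Next I would verify that $S(\{f_n\})=T(\{f_n\})*g$ for $\{f_n\}$ in the domain space. Writing $h=T(\{f_n\})=\sum_{k\ge0}V_k*f_k$, the partial sums $h_N=\sum_{k=0}^N V_k*f_k$ converge to $h$ in $Z$: using $V_n*V_k=0$ for $|n-k|\ge2$, \eqref{71}, and \eqref{lambda}, one checks that the $X$-norm of $V_n*(h-h_N)$, divided by $\lambda_n$, is controlled by $\max_{|k-n|\le1,\,k>N}\|f_k\|_X/\lambda_k$ and vanishes for $n<N$, whence $\|h-h_N\|_Z\to0$ because $\{f_n\}\in\mathfrak c_0(\lambda,X)$, resp. $\ell^1(\lambda,X)$. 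Applying the bounded map $h\mapsto h*g$ gives $h*g=\lim_N h_N*g=\lim_N\sum_{k=0}^N f_k*V_k*g$ in $A(\D)$, i.e. the series defining $S(\{f_n\})$ converges in $A(\D)\subset H^\infty$ to $h*g$.

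Assembling, $\|S(\{f_n\})\|_\infty=\|h*g\|_\infty\le\|g\|_{Z'}\|h\|_Z\le C\|g\|_{Z'}\|\{f_n\}\|$, where the last inequality is the boundedness of $T$; taking the supremum over the unit ball of the domain yields $\|S\|\le C\|g\|_{(v_0(\lambda,X))'}$, resp. $\|S\|\le C\|g\|_{(V^1(\lambda,X))'}$. I expect no genuine difficulty here: the whole argument is a composition of two estimates already available, and the only slightly delicate point is the identification $S(\{f_n\})=T(\{f_n\})*g$, which rests on the convergence $h_N\to h$ in $Z$ and on the continuity of Hadamard multiplication by the fixed factor $g$ — precisely the boundedness of $h\mapsto h*g\colon Z\to A(\D)$.
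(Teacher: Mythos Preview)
Your proof is correct and follows essentially the same route as the paper: compose the boundedness of $T$ from Lemma~\ref{lqs} with the multiplier estimate $\|h*g\|_\infty\le\|h\|_Z\,\|g\|_{Z^\ast}$ coming from the identification $Z'=Z^\ast=(Z,H^\infty)$. The paper's proof is the two-line version of exactly this; your additional paragraph justifying the identification $S(\{f_n\})=T(\{f_n\})*g$ via convergence of the partial sums $h_N\to h$ in $Z$ is a welcome elaboration that the paper leaves implicit.
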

     \begin{proof}

     By the preceding lemma, we have
     \begin{align*}\|S(\{f_n\})\|_\infty &\le \|T(\{f_n\})\|_{v_0(\lambda,X)}\|g\|_{(v_0(\lambda,X))\ast}  \\&
     \le C\|\{f_n\}\|_{\mathfrak c_0(\lambda,X)}\|g\|_{(v_0(\lambda,X))\ast}. \end{align*}
      This proves the result in one case. In the other case the proof is the same.
       \end{proof}
     \begin{proof}[Proof of Proposition \ref{th-dual}]     Define the polynomials $P_n$ $(n\ge0)$ by
\begin{equation*}\label{}\begin{aligned} P_n=V_{n-1}+V_n+V_{n+1}. \end{aligned} \end{equation*}
Hence
\begin{align*} V_n=\sum_{j=0}^\infty V_j*V_n=(V_{n-1}+V_n+V_{n+1})*V_n=P_n*V_n. \end{align*}
Let $f\in v_0(\lambda,X)$ and $g\in V^1(\lambda,X').$ It is easily verified that, when $0<r<1,$
\begin{align*} (f*g)(z) &=\sum_{n=0}^\infty (f*V_n*g)(z)\\&
=\sum_{n=0}^\infty  (P_n*f*V_n*g)(z), \quad z\in \D. \end{align*}
the series being absolutely convergent. Since
\begin{align*}
\|P_n*f_r*V_n*g\|_\infty&\le
 \|P_n*f\|_X\,\|V_n*g\|_{X^\ast},
\end{align*}
we have
\begin{align*}\|f*g\|_\infty&\le\sum_{n=0}^\infty \|P_n*f\|_X\,\|V_n*g\|_{X^\ast}   \\&\le C
\sum_{n=0}^\infty \|P_n*f\|_X\,\|V_n*g\|_{X^\ast}\\&
=C\sum_{n=0}^\infty \big(\|P_n*f\|_X/\lambda_n\big)\,\big(\lambda_n\|V_n*g\|_{X^\ast}\big)\\&
\le C\|f\|_{v_0(\lambda,X)}\|g\|_{V^1(\lambda,X^\ast)}
\end{align*}
This proves the inclusion $V^{1}(\lambda,X')\subset (v_0(\lambda,X))'.$

To prove the converse, let $g\in (v_0(\lambda,X))^\ast.$ Let $S$ denote the operator defined in Lemma \ref{S}.
By Lemma \ref{S}, $ S$ acts as a bounded operator from $\mathfrak c_0(\lambda,X)$ into $H^\infty$ and we have
$\|S\|\le C\|g\|_{v_0(\lambda,X)^\ast}.$ Now it suffices to prove that
\[\begin{aligned} \|S\|\ge (1/2) \|\{g_n\}\|_{\ell^1(\lambda,X^\ast)}=(1/2)\|g\|_{V^1(\lambda,X)}.  \end{aligned} \]
 For each $n\ge 0$ choose $f_n\in X$ so that $\|f_n\|_X=1$ and $\langle f_n,g_n\rangle$ is a real number such that $\langle f_n,g_n\rangle\ge (1/2)\|g_n\|_{X^\ast}.$
If $\{a_n\}$ is a finite sequence of nonnegative real numbers, then
\begin{align*}S(\{a_nf_n\})&=\sum_{n=0}^\infty a_n\la f_n,g_n\ra   \\&
\ge (1/2)\sum_{n=0}^\infty a_n\|g_n\|_{X^\ast}\\&
=(1/2)\sum_{n=0}^\infty (a_n/\lambda_n)\lambda_n\|g_n\|_{X^\ast}.  \end{align*}
Hence, by taking the supremum over all $\{a_n\}$ such that $0\le a_n\le \lambda_n,$
we get $S\{\lambda_n f_n\}\ge (1/2)\sum_{n=0}^\infty \lambda_n\|g_n\|_{X^\ast}.$ Since $\|\{a_nf_n\}\|_{\mathfrak c_0(\lambda,X)}\le 1,$ where $a_n=\lambda_n$  for $0\le n\le N$ $(N\in\mathbb N)$ and $a_n=0$ for $n>N$ we see that
$\|S\|\ge (1/2)\|g\|_{V^1(\lambda, X^\ast)},$ as desired. This completes the proof that $v_0(\lambda,X)'=V^1(\lambda,X').$ In a similar way one proves that $V^1(\lambda,X)'=V^\infty(\lambda,X')$, which is all what was to be proved.
\end{proof}
\begin{proof}[Proof of Theorem \ref{thm-dual}]
First we  prove that $(\mathfrak b_{\log^\alpha})'=\bk^1_{\log^\alpha}.$  By Theorem \ref{thm-decomp}, we have
$\mathfrak b_{\log^{\alpha}}=v_0(\lambda, A(\D)),$ where $\lambda_n=(n+1)^\alpha.$ Hence, by Proposition \ref{th-dual}, the dual of $\mathfrak b_{\log^{\alpha}}$ is isomorphic to $V^1(\lambda,A(\D)').$ In order to estimate $\|V_n*g\|_{A(\D)'}$ first observe that $H^1\subset A(\D)'$ and moreover $\|V_n*g\|_{A(\D)'}\le \|V_n*g\|_{1}. $
On the other hand, let $\Phi$ be a bounded linear functional on $A(\D)$, let $\Phi_0$ be the Hahn/Banach extension of $\Phi$ to
$hC(\D)$, and choose $g\in A(\D)^{\rm a}$ so that $\Phi(f)=\langle f,g\rangle$ for all $f\in A(\D).$ By the Riesz representation theorem, we have
\begin{align*}
\Phi_0(f)&=\frac1{2\pi}\int_0^{2\pi} f(e^{-i\theta})\,d\mu(e^{i\theta})\\&= \lim_{r\to1^- }\frac 1{2\pi}\int_0^{2\pi} f(re^{-i\theta})g(re^{i\theta})\,d\theta\\&= \lim_{r\to 1^-}\sum_{n=0}^\infty \hat f(n)\hat g(n) r^{2n},\end{align*} and $\|\mu\|=\|\Phi\|=\|\Phi_0\|.$ In particular, taking $f(w)=(1-zw)^{-1},$ where $z\in\D$ is fixed, we get
\begin{align*}
\frac1{2\pi}\int_0^{2\pi} (1-e^{-i\theta} z)^{-1}{d\mu(e^{i\theta})}=g(z).
\end{align*}
Hence
\begin{align*}\mathcal R^1 g(z)=\frac1{2\pi}\int_0^{2\pi} (1-e^{-i\theta} z)^{-2}\,d\mu(e^{i\theta}), \end{align*}
and hence, by integration,
\begin{equation*}\label{}\begin{aligned} M_1(r,\mathcal R^1f)\le \|\mu\|(1-r^2)^{-1}=\|g\|_{A(\D)'}(1-r^2)^{-1}.
\end{aligned} \end{equation*}
 Now we proceed as in the proof of Theorem \ref{thm-decomp} to conclude that $\|V_n*g\|_1\le C\|V_n*g\|_{A(\D)'}.$
 It follows that $g\in (\mathfrak b_{\log^\alpha})'$ if and only if $g\in V^1(\lambda,H^1),$ i.e., by Theorem \ref{thm-decomp}, $g\in \bk^1_{\log^\alpha}.$

 In proving that $(\bk^1_{\log^\alpha})'$ is isomorphic to $\bk_{\log^\alpha}$ we use the inclusions $H^\infty\subset (H^1)'\subset \mathfrak B,$ and then proceed as above.
\end{proof}
\begin{rem}
The above proof of Theorem \ref{thm-dual} certainly is not the simplest one. However, it can be applied to prove some general duality and multipliers theorems (see \cite{I,  II,  coef}. For instance, the dual of $\mathfrak b_{\rm logg}$
is isomorphic to $\bk^1_{\rm logg},$ and the dual of $\bk^1_{\rm logg}$ is isomorphic to $\bk_{\rm logg}.$
\end{rem}

\bibliographystyle{amsplain}

\begin{thebibliography}{10}
\bibitem{aleman} A. Aleman, M. Carlsson, and A.-M. Persson,
\emph{Preduals of $Q_p$-spaces},
            Complex Var. Elliptic Equ.
             {\bf 52} (2007), No. 7, 605--628.

\bibitem{blasco-revista} O. Blasco and M. Pavlovi\'c, {\em Coefficient multipliers on Banach spaces of analytic functions},
Revista Mat. Iberoamericana, to appear.

\bibitem{brown} L. Brown and A.L. Shields, \emph{Multipliers and cyclic vectors in the Bloch space,} Michigan Math. J.
{\bf 38} (1991), 141--146.


\bibitem {DA} N. Danikas and A.G.Siskakis,
 \emph{The Ces\`aro operator on bounded analytic functions}, {Analysis} {\bf 13} (1993), no. 3, 295-299.

\bibitem{DRS} N. Danikas, S. Ruscheweyh, and A. Siskakis, \emph{
Metrical and topological properties of a generalized Libera transform},
Arch. Math. 63, No.6, (1994), 517--524.


 \bibitem{hplq} M. Jevti\'c and M. Pavlovi\'c,  \emph{On multipliers from $H^p$  to $\ell^q$ $ (0<q<p<1)$}
  Arch. Math. {\bf 56} (1991), 174--180.

\bibitem{coef} M. Jevti\'c and M. Pavlovi\'c,
 \emph{Coefficient multipliers on spaces of analytic functions,}
 Acta Sci. Math. (Szeged) {\bf 64} (1998), 531--545.


\bibitem{LR} D.H. Luecking, LA. Rubel, {\it Complex Analysis, A
Functional Analysis Approach}, Springer-Verlag, New York, 1984.


\bibitem{stu} M. Mateljevi\'c and M. Pavlovi\'c, \emph{$L^p$ behaviour of the integral means of analytic functions},
Studia Math. 77(1984), 219--237.

\bibitem{Miao} J. Miao, {\em The Ces\`{a}ro operator is bounded on
$H^p$ for $0<p<1$}, {Proc. Amer. Math. Soc. } {\bf 116} (1992),
1077-1079.
\bibitem{Nowak} M. Nowak, {\em Another proof of boundedness of the
 Ces\`{a}ro operator  on $H^p$,} {Ann. Univ. Mariae Curie-Sk\l
 odowska  Sect. A} {\bf 54} (2000),75--78.

\bibitem{NP} M. Nowak and M. Pavlovi\'c, \emph{On the Libera operator,} J. Math. Anal. Appl. {\bf 370}, No. 2, (2010),  588--599.

\bibitem{I} M. Pavlovi\'c, \emph{Mixed norm spaces of analytic and harmonic functions. I,}
{ Publ. Inst. Math. (Beograd) {\bf 40} (1986), 117--141.}
\bibitem{II}
M. Pavlovi\'c, \emph{Mixed norm spaces of analytic and harmonic functions. II,}
Publ. Inst. Math. (Beograd) {\bf 41} (1987), 97--110.
\bibitem{edinb} M. Pavlovi\'c, \emph{On the moduli of continuity of $H^p$ functions with $0<p<1$},
Proc. Edinburgh Math. Soc. {\bf 35}(1992), 89--100.


\bibitem{mono}
M. Pavlovi\'c, \emph{Introduction to Function Spaces on the Disk,}
Mate\-mati\-\v cki institut u Beogradu, Posebna izdanja [Special Editions] 20, 2004.

\bibitem{decreasing} M. Pavlovi\'{c}, \emph{Analytic functions with decresing coefficients and Hardy spaces,} submitted.

\bibitem{novo} M. Pavlovi\'{c}, \emph{Definition and properties of the Libera operator on mixed norm spaces,} submitted.

 \bibitem{ru} L. A. Rubel, A. L. Shields, and B. A. Taylor, \emph{Mergelyan sets and the modulus of continuity
of analytic functions}, J. Approximation Theory {\bf 15} (1975), no. 1, 23--40.



\bibitem{SW} A.L. Shields and D.L. Williams, \emph{Bounded projections, duality, and multipliers in
spaces of analytic functions}, Trans. Amer. Math. Soc. {\bf 162} (1971), 287--302.

\bibitem{S} A.G. Siskasis, \emph{Composition semigroups and the Ces\`{a}ro operator on
$H^p$,} {J. London Math. Soc.} {\bf 36}(2)  (1987), 153--164.



\bibitem{S1} A. G. Siskakis, \emph{The Ces\`{a}ro operator is bounded on
$H^1$,} {Proc. Amer. Math. Soc. } {\bf 110} (1990), 461--462.


\bibitem{ta} P. M. Tamrazov, \emph{Contour and solid structural properties of holomorphic
functions of a complex variable} (Russian), Uspehi Mat. Nauk {\bf 28}
(1973), 131--161. English translation in Russian Math. Surveys {\bf 28}
(1973), 141--173.


\bibitem{xiao} Jie Xiao, \emph{Ces\`aro-type operators on Hardy, BMOA and Bloch spaces},
Arch.  Math. 68, No. 5 (1997),  398--406.


\end{thebibliography}

\end{document}